\newtheorem{thm}{Theorem}[section]
\newtheorem{lem}[thm]{Lemma}
\theoremstyle{definition}
\newtheorem{defi}[thm]{Definition}
\newcommand{\nc}{\newcommand}
\newcommand{\delete}[1]{}
\nc{\mlabel}[1]{\label{#1}}  
\nc{\mcite}[1]{\cite{#1}}  
\nc{\mref}[1]{\ref{#1}}  
\nc{\meqref}[1]{\eqref{#1}}  
\nc{\mbibitem}[1]{\bibitem{#1}} 
\nc{\mlabel}[1]{\label{#1}{\hfill \hspace{1cm}{\bf{{\ }\hfill(#1)}}}}
\nc{\mcite}[1]{\cite{#1}{{\bf{{\ }(#1)}}}}  
\nc{\mref}[1]{\ref{#1}{{\bf{{\ }(#1)}}}}  
\nc{\meqref}[1]{\eqref{#1}{{\bf{{\ }(#1)}}}}  
\nc{\mbibitem}[1]{\bibitem[\bf #1]{#1}} 
\nc{\tblue}[1]{\textcolor{blue}{#1}}
\nc{\tred}[1]{\textcolor{red}{#1}}
\nc{\tnb}[1]{\textcolor[rgb]{0.00,0.00,0.50}{#1}}
\nc{\tpurple}[1]{\textcolor{purple}{#1}}
\nc{\name}[1]{{\bf #1}}
\nc{\lc}{\lfloor}
\nc{\rc}{\rfloor}
\nc{\dirlim}{\varinjlim}
\nc{\bfk}{\mathbf{K}}
\nc{\invlim}{\displaystyle{\lim_{\longleftarrow}}\,}
\nc{\ot}{\otimes}
\nc{\im}{\mathrm{im}}
\nc{\free}[1]{\overline{#1}}
\nc{\tforall}{\quad \text{for all }}
\nc{\Ad}{\mathrm{Ad}}
\nc{\AD}{\overline{\mathrm{Ad}}}
\nc{\Aut}{\mathrm{Aut}}
\nc{\desc}{descendent\xspace}
\nc{\B}{B}
\newcommand{\g}{\mathfrak g}
\nc{\BA}{{\mathbb A}} \nc{\CC}{{\mathbb C}} \nc{\DD}{{\mathbb D}} \nc{\EE}{{\mathbb E}} \nc{\FF}{{\mathbb F}} \nc{\GG}{{\mathbb G}} \nc{\HH}{{\mathbb H}} \nc{\LL}{{\mathbb L}} \nc{\NN}{{\mathbb N}} \nc{\KK}{{\mathbb K}} \nc{\PP}{{\mathbb P}} \nc{\QQ}{{\mathbb Q}} \nc{\RR}{{\mathbb R}} \nc{\TT}{{\mathbb T}} \nc{\VV}{{\mathbb V}} \nc{\ZZ}{{\mathbb Z}}
\nc{\cala}{{\mathcal A}} \nc{\calc}{{\mathcal C}} \nc{\cald}{{\mathcal D}} \nc{\cale}{{\mathcal E}} \nc{\calf}{{\mathcal F}} \nc{\calg}{{\mathcal G}} \nc{\calh}{{\mathcal H}} \nc{\cali}{{\mathcal I}} \nc{\call}{{\mathcal L}} \nc{\calm}{{\mathcal M}} \nc{\caln}{{\mathcal N}} \nc{\calo}{{\mathcal O}} \nc{\calp}{{\mathcal P}} \nc{\calr}{{\mathcal R}} \nc{\cals}{{\mathcal S}} \nc{\calt}{{\mathcal T}} \nc{\calw}{{\mathcal W}} \nc{\calk}{{\mathcal K}} \nc{\calx}{{\mathcal X}}
\nc{\calz}{{\mathcal Z}}
\nc{\WC}{\mathcal{WC}}
\nc{\bvp}[2]{\boxed{\begin{array}{l}#1\\#2\end{array}}}
\nc{\evl}{E} \nc{\cum}{{\textstyle \varint}} \nc{\mapmonoid}{\frakM}
\nc{\X}{X^{\pm 1}} \nc{\redx}{\text{Red}(X)} \nc{\bre}{\text{bre}}\nc{\dep}{\mathrm{dep}}
\nc{\rbgw}{\frakR(X)} \nc{\rbgo}{\lc \, \rc} \nc{\lbar}{\overline}
\newcommand{\frakx}{\mathfrak x}
\newcommand{\frakM}{\mathfrak M}
\newcommand{\frakR}{\mathfrak R}
\begin{document}

\title[Free operated, differential and Rota-Baxter groups]{Operated groups, differential groups and Rota-Baxter groups with an emphasis on the free objects}

\author{Xing Gao} \address{School of Mathematics and Statistics,
	Key Laboratory of Applied Mathematics and Complex Systems,
	Lanzhou University, Lanzhou, 730000, China}
\email{gaoxing@lzu.edu.cn}

\author{Li Guo}
\address{
Department of Mathematics and Computer Science,
         Rutgers University,
         Newark, NJ 07102}
\email{liguo@rutgers.edu}

\author{Yanjun Liu}
\address{Department of Mathematics, Jiangxi Normal University, Nanchang, China}
\email{liuyanjun@pku.edu.cn}

\author{Zhi-Cheng Zhu}
\address{School of Mathematics and Statistics, Lanzhou University, Lanzhou, Gansu 730000, China}
\email{zhuzhch16@lzu.edu.cn}

\date{\today}

\begin{abstract}
Groups with various types of operators, in particular the recently introduced Rota-Baxter groups, have generated renowned interest with close connections to numerical integrals, Yang-Baxter equation, integrable systems and post-Hopf algebras. This paper gives the general notion of operated groups and provides explicit constructions of free operated groups, free differential groups and free Rota-Baxter groups.
\end{abstract}

\subjclass[2010]
{22E60, 
08B20, 
20D45, 
17B38, 
17B40 
}

\keywords{Operated groups, crossed homomorphism, differential groups, Rota-Baxter group, free object}

\maketitle

\vspace{-.9cm}

\tableofcontents

\vspace{-.9cm}

\allowdisplaybreaks

\section{Introduction}

Linear operators are ubiquitous in mathematics. Notable examples are the endomorphisms and automorphisms in Galois theory, derivations and integral operators in analysis and related areas.
Even though groups do not carry a linear structure, the study of operations on groups has a long history. As noted in ~\mcite{Wio}, groups with endomorphisms were extensively studied by Emmy Noether and her school in the 1920s. In fact, it is this concept that she employed in her original formulation of the three Noetherian isomorphism theorems.
In the classical treatment of groups by  Bourbaki~\cite[pp.~30-31]{Bou}, the notion of groups with endomorphisms was introduced at the very beginning and was then specialized to define modules (in analogy to rings with operators used to define algebras). See also MacLane~\cite[p.~41]{Mac}.
In addition to the endomorphisms, operators on groups with other properties had appeared, such as crossed homomorphisms in group cohomology~\mcite{Se}.

Thanks to the flexibility afforded by the linear structure, linear operators on various algebras have flourished. Along with the ones mentioned above, there were also the difference operators, differential operators {\it with weight}, Rota-Baxter operators, Reynolds operators, averaging operators and Nijenhus operators, as well as their multi-operator analogs,  abstracted from geometry, probability, fluid mechanics, analysis, combinatorics, differential equations and mathematical physics, and with applications in broad areas from quantum field renormalization to mechanical proofs of geometric theorems~\mcite{BGN,Bax,CGM,CK,Da,Gu,Gub,GGL,GK,Kf,Ko,Ku,N,PS,Ri,Ro1,STS,ZtGG,ZGG,ZGG2,ZGM}. 

Putting the corresponding operator identities in a unified framework, Rota raised the question on a classification of operator identities that can be defined on algebras~\mcite{Ro2}. Attempts to address Rota's question have led to a deeper understanding of operator identities in terms of operated algebras, rewriting systems and Gr\"obner-Shirshov bases, and to the discovery of new operator identities~\mcite{GG,GGZh,Guop,GSZ,ZGGS}.

Recently, such enduring enthusiasm on linear operators on algebras has led to renewed interest in operators on groups, beginning with~\mcite{GLS} motivated by the classical Yang-Baxter equation and Poisson geometry~\mcite{Mk,STS}.
More precisely, the notion of Rota-Baxter operators on (Lie) groups was defined in~\mcite{GLS} for which the differentiation gives Rota-Baxter operators on the corresponding Lie algebras. As an application, the fundamental factorization of a Lie group in~\mcite{RS1,RS2,STS} originally obtained indirectly from locally integrating a factorization of a Lie algebra now comes directly from a global factorization of the Lie group, equipped with the Rota-Baxter operator. Incidentally, the formal inverse of a Rota-Baxter operator on a Lie group is none other than the crossed homomorphism on the Lie group, and differentiates to a differential operator on the Lie algebra of left-translation-invariant vector fields.

After this work, there has been a boom of studies on the theory and applications of differential and Rota-Baxter operators on groups.

In~\mcite{BG}, general properties of Rota-Baxter groups, especially extensions of Rota-Baxter groups and Rota-Baxter operators on sporadic simple groups, were studied. In~\mcite{BG2}, it was shown that Rota-Baxter groups give rise to braces and the more general skew left braces in quantum Yang-Baxter equation.
In~\mcite{CS}, a characterization of the gamma functions on a group that come from Rota-Baxter operators as in~\mcite{BG2} was given in terms of the vanishing of a certain element in a suitable second cohomology group.
In~\mcite{CMS}, Rota-Baxter operators on Clifford semigroups were introduced as a useful tool for obtaining dual weak braces.
In~\mcite{BNY,St}, Rota-Baxter groups and skew braces were further studied. 

In~\mcite{JSZ}, a cohomological theory of Rota-Baxter Lie groups was developed which differentiates to the existing cohomologicla theory of Rota-Baxter Lie algebras.
In~\mcite{LS}, a one-to-one correspondence was established between factorizable Poisson Lie groups and quadratic Rota-Baxter Lie groups.
\mcite{Gon} introduced and studied a Rota-Baxter operator on a cocommutative Hopf algebra that generalizes the notions of a Rota-Baxter operator on a group and a Rota-Baxter operator of weight 1 on a Lie algebra.
Finally, in~\mcite{BGST}, post-groups and pre-groups were introduced that can be derived from Rota-Baxter groups and capture the extra structures on (Lie-)Butcher groups in numerical integration, braces, Yang-Baxter equation and post-Hopf algebras~\mcite{BG2,CJO,ESS,LYZ,ML,MW}.

With all these activities on Rota-Baxter groups, differential groups and related structures, it is time to give a systematic study of Rota-Baxter groups and the more general operated groups. As with any algebraic structure, the free objects should play a fundamental role in the study of such group structures. For instance, being able to give an explicit construction of the free objects affords a clear picture on how the operations behave and allows every other object to be expressed as a quotient of a free object.

Our main goal of this paper is the explicit constructions of the free objects, specifically free operated groups, free differential groups and free Rota-Baxter groups.
In Section~\mref{sec:opgp}, after some general discussion of operated groups, free operated groups are constructed.
Then properties and free objects of differential groups are obtained in Section~\mref{sec:difgp}. Finally in Section~\mref{sec:rbgp}, free Rota-Baxter groups are constructed.

\section{Operated groups}
\mlabel{sec:opgp}
In this section we give the notion of operated groups and recall the special cases of Rota-Baxter groups and differential groups. We then construct free operated groups.

\subsection{Notations and examples}
We first give the notion of operated groups. We then provide some examples in both concrete and general terms, some of which will be studied further in later sections.

\begin{defi}
An {\bf operated group} is a pair $(G,P)$ consisting of a group $G$ and a map $P:G\to G$.
A {\bf homomorphism} from an operated group $(G,P)$ to another one $(G',P')$ is a group homomorphism $f:G\to G'$ such that $P'f=fP$.
\end{defi}

Well-known examples of operated groups are groups equipped with group endomorphisms as mentioned in the introduction. In the classical literature such as~\mcite{Bou}, the term groups with  operators means that the operators are group endomorphisms; while for us, the operators do not need to satisfy any condition. For distinction, a group equipped with an group endomorphism will be called an {\bf endo group}. 
Further examples of operated groups are differential groups coming from crossed homomorphisms~\mcite{Se}, and Rota-Baxter groups arising recently from studies of integrable systems and Hopf algebras~\mcite{GLS}.

Recall that a \name{differential operator (of weight $\lambda$)}~\mcite{GK} is an associative algebra $A$ with a linear operator $d:A\to A$ such that
$$ d(uv)=d(u)v+ud(v)+\lambda d(u)d(v) \tforall u, v \in A.$$
The term comes from its classical example as the operator of differential quotient:
$$ d(f)(x):=(f(x+\lambda)-f(x))/\lambda$$
in analysis. Its study in the weight zero case originated from the work of Ritt~\mcite{Ri} on algebraic study of differential equations. Then through the work of Kolchin~\mcite{Ko} and many others (see~\mcite{PS} and the references therein), the study has expended into an area with broad applications. The weighted notion was introduced in~\mcite{GK}.

Similarly, a differential operator (of weight $\lambda$) on a Lie algebra \mcite{LGG,JS,KKL} is a linear map $d:\g\to\g$ such that $$
d[u,v]_\g=[d(u),v]_\g+[u,d(v)]_\g+\lambda[d(u),d(v)]_\g \tforall u,v\in \g.
$$

Let $G$ be a group. For $g\in G,$ define the \name{ adjoint action}
$$\Ad_g:G\longrightarrow G, \quad \Ad_g h:= ghg^{-1} \tforall~ h\in G.$$
Then $\Ad_g$ is in $\Aut(G)$ and the resulting map
$$\Ad: G\to \Aut(G), \quad \Ad(g):=\Ad_g \tforall g\in G,$$
is a group homomorphism.

\begin{defi} \mlabel{de:diffgp}
A map $D$ on a group $G$ is called a \name{ differential operator (of weight $1$)} (resp. $-1$) if
	\begin{equation}
		\mlabel{eq:diffgp}
		D(gh)=D(g)\Ad_gD(h) \quad  \Big(\text{resp. } D(gh)=(\Ad_gD(h))D(g)\Big) \tforall g, h\in G.
	\end{equation}
Then the pair $(G,D)$ is called a \name{differential group of weight $1$} (resp. $-1$).
\end{defi}
It remains an open problem to find a suitable notion of differential operator of weight zero on a group. 

\emph{For the rest of the paper, we only work with differential group of weight $1$, which we simply call a {\bf differential group}.}

As shown in~\mcite{GLS}, the derivation of a differential operator on a Lie group is a differential operator (of weight 1) on the corresponding Lie algebra.
This notion is the special case of the classical notion of a crossed homomorphism or a 1-cocycle with non-abelian coefficients~\cite{Se}.

\begin{defi}
	Let $G$ be a group and let $\Gamma$ be a group with a group action by $G$, defined by $\alpha\mapsto \alpha^x, \alpha\in \Gamma, x\in G$. A map $f: G\to \Gamma$ is called a {\bf crossed homomorphism} or a {\bf 1-cocycle} if
	\begin{equation} \notag
		f(xy)=f(x)f(y)^x  \tforall x, y \in G.
	\end{equation}
\end{defi}

Crossed homomorphism is the key notion for group cohomology and Galois cohomology, especially for noncommutative groups.
Taking $\Gamma$ to be the group $G$ itself equipped with the adjoint action of $G$, we obtain the notion of the differential operator on groups. In general, a crossed homomorphism can be interpreted as a relative differential operator on a group associated with a group representation~\mcite{BGST}.

The notion of a Rota-Baxter group is obtained in~\mcite{GLS} in an effort to find the integration of a Rota-Baxter operator on a Lie algebra in order to study the factorization on a Lie group without having to integrate from a factorization on the Lie algebra~\mcite{RS1,RS2,STS}.
\begin{defi} \mlabel{de:rbg}
A \name{ Rota-Baxter group} (of weight 1) is a group $G$ with a map $\B:G\longrightarrow G$ satisfying the following equation
\begin{eqnarray}\mlabel{RBgroup}
		\B(g_1)\B(g_2)=\B(g_1\Ad_{\B(g_1)} g_2) \tforall~ g_1,g_2\in G,
\end{eqnarray}
called the \name{ Rota-Baxter relation} for  groups.
\end{defi}

When $G$ is an abelian group, Eq.~\eqref{RBgroup} means that $\B$ is a group homomorphism.

A Rota-Baxter operator of weight 1 on a group is also the right inverse of a differential operator of weight 1, as in the case of operators on associative algebras and Lie algebras.

	Let $(G,\B)$ and $(G',\B')$ be Rota-Baxter groups. A map $\Phi:G\longrightarrow G'$ is a \name{Rota-Baxter group homomorphism} if $\Phi$ is a group homomorphism such that $\Phi \B=\B' \Phi.$

\begin{defi}\mlabel{de:wt-1}
We define a \name{ Rota-Baxter group of weight $-1$} to be a group $G$ with a map $C:G\longrightarrow G$ such that
\begin{eqnarray}\mlabel{RBgroup-1} \notag
C(g_1)C(g_2)=C((\Ad_{C(g_1)} g_2)g_1) \tforall~ g_1,g_2\in G.
\end{eqnarray}
\end{defi}

It still unknown how to give a suitable notion of Rota-Baxter operator of weight zero on a group such that the derivation is the Rota-Baxter operator of weight zero on a Lie algebra. 

Let $\B$ be a Rota-Baxter operator (of weight 1) on a group $G$. Define $C:G\to G$ by
$$ C(g):=\B(g^{-1}).$$
In Eq.~\meqref{RBgroup}, replacing $g_1$ and $g_2$ by $g_1^{-1}$ and $g_2^{-1}$, we obtain
$$\B(g_1^{-1})\B(g_2^{-1}) =\B(g_1^{-1}\Ad_{\B(g_1^{-1})}g_2^{-1})
=\B\Big(\Big(\big(\Ad_{\B(g_1^{-1})}g_2\big) g_1 \Big)^{-1}\Big).$$
This gives
$$C(g_1)C(g_2)=C\Big(\big(\Ad_{C(g_1)}g_2\big)g_1\Big).
$$
Therefore, $C$ is a Rota-Baxter operator of weight $-1$.

Let $G_1$ and $G_2$ be subgroups of a group $G$ such that $G=G_1G_2$ and $G_1\cap G_2=\{1\}$. Note that it is not necessary that $G$ is the direct product of $G_1$ and $G_2$.  Let $C$ be the \name{projection to the first direct factor}, i.e.
$$
C(g_+g_-):=g_+.
$$
Then it is straightforward to deduce that $C$ is a Rota-Baxter operator of weight $-1$~\mcite{GLS}.

{\it For the rest of the paper, by a {\bf Rota-Baxter group} we mean a Rota-Baxter group of weight $1$.}

\subsection{Free operated groups}
\mlabel{ss:freeopgp}

The notion of free operated groups can be defined in the language of universal algebra by a universal property. Here we give an explicit construction of a free operated group on a set by bracketed words.

\begin{defi}
	Let $X$ be a set. The \name{free operated group} generated by $X$ is an operated group $(F(X),P_X)$ together with a map $j_X:X\to F(X)$ with the property that, for each operated group $(G,P)$ and map $f:X\to G$, there is a unique homomorphism $\free{f}:(F(X),P_X)\to (G,P)$ of operated groups such that $\free{f} j_X=f$. 	
\end{defi}

We first give some general notations. For any set $Y$, let $F(Y)$ denote the free group generated by $Y$, via the usual construction of reduced words in $Y\sqcup Y^{-1}$. So $F(Y)$ consists of the identity $1$ and reduced words in the letter set $Y\cup Y^{-1}$, where a reduced word is an element of the form
$$ w=w_1\cdots w_k,$$
with $w_i\in Y\sqcup Y^{-1}$ so that no adjacent letters $w_i$ and $w_{i+1}$ are of the form $z z^{-1}$ for a $z\in Y\sqcup Y^{-1}$.
Also let $\lc Y \rc$ denote a set in bijection with $Y$ but disjoint from $Y$. Elements in $\lc Y\rc$ are denoted by $\lc y\rc, y\in Y$.

Now let $X$ be a given set. We define a directed system of free groups $\calg_n:=\calg_n(X), n\geq 0,$ by a recursion. First define the free group
$$ \calg_0: = F(X).$$
Then define the free group
$$\calg_1:=F(X\sqcup \lc F(X)\rc)$$
generated by $X\sqcup \lc F(X)\rc$ and let
$$ i_{0,1}: \calg_0\to \calg_1$$
denote the group homomorphism of free groups induced by the natural inclusion $X\to X\sqcup \lc F(X)\rc$.

Recursively, for any given integer $n\geq 1$, suppose that
the groups $\calg_i, i\leq n,$ have been constructed and the injective group homomorphisms
$$i_{i-1, i}: \calg_{i-1}\to \calg_i, i\leq n,$$
have been defined. Then define the free group
$$ \calg_{n+1}:=F(X\sqcup \lc \calg_n\rc)$$
generated by the set $X\sqcup \lc \calg_n\rc$.

Furthermore, from the injective group homomorphism $i_{n-1,n}:\calg_{n-1}\to \calg_n$, we obtain the injections \begin{equation}
	\lc \calg_{n-1}\rc \to \lc \calg_n\rc, \quad \lc w\rc \mapsto \lc i_{n-1,n}(w)\rc \tforall w\in \calg_{n-1}
	\mlabel{eq:btrans}
\end{equation} and $X\sqcup \lc \calg_{n-1}\rc \to X\sqcup \lc \calg_n\rc$. The latter then induces the injective homomorphism of free groups
\begin{equation} \notag
	i_{n,n+1}: \calg_n=F(X\sqcup \lc \calg_{n-1}\rc)  \to \calg_{n+1}=F(X\sqcup \lc \calg_n\rc).
	\mlabel{eq:opgpincl}
\end{equation}
This completes the recursive construction of the directed system $\{\calg_n\}_{n\geq 0}$ of free groups. Finally, we define the direct limit of the directed system

$$ \calg:=\calg(X):=\dirlim \calg_n = \cup_{n\geq 0} \calg_n.$$
It is still a group which naturally contains $\calg_n, n\geq 0,$ as subgroups via the structural injective group homomorphisms
\begin{equation}
	i_n:\calg_n\to \calg.
	\mlabel{eq:nind}
\end{equation}

Denote 
$$X^{\pm 1}:= X \sqcup X^{-1},\,\lc \calg\rc^{-1}:= \{w^{-1} \mid w\in \lc \calg\rc\}\,\text{ and }\,\lc \calg\rc^{\pm 1}:= \lc \calg\rc \sqcup \lc \calg\rc^{-1}. $$
For a $w\in \calg$, we may uniquely write
\begin{equation}
\mlabel{eq:frbg}
w = w_1\cdots w_k\, \text{ with }\, w_i\in X^{\pm 1} \sqcup \lc \calg\rc^{\pm 1},
\end{equation}
with no adjacent factors being the inverse of each other.
The factorization is called the \name{standard factorization} or \name{standard form} of $w$.
Define the \name{breadth} of $w$ to be $\bre(w):=k$ and \name{depth} of $w$ to be
$$\dep(w):= \max \{ \dep(w_i) \mid 1\leq i\leq k\}, \,\text{ where }\, \dep(w_i) := \min\{n \mid w_i\in \calg_n\setminus \calg_{n-1} \}.$$

We next define an operator $P_X$ on $\calg$. For $u\in \calg$, we have $u\in \calg_n$ for some $n\geq 0$. We then define $P_X(u)=\lc u\rc$ which is in $\calg_{n+1}$ and hence in $\calg$. It is direct to check that $P_X(u)$ is independent of the choice of $n$ such that $u$ is in $\calg_n$. Thus the pair $(\calg(X),P_X)$ is an operated group. Finally let
$$ j_X: X\to \calg$$
be the natural inclusion.

\begin{thm} \mlabel{thm:freeopgp}
	The operated group $(\calg(X),P_X)$ together with the map $j_X:X\to \calg(X)$ is the free operated group on $X$.
\end{thm}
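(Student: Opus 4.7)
The plan is to verify the universal property by constructing the unique homomorphism $\bar f$ level by level along the directed system $\{\calg_n\}_{n\ge 0}$, then passing to the direct limit. Fix an operated group $(G,P)$ and a map $f:X\to G$.

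First I would build a compatible family $\bar f_n:\calg_n\to G$ of group homomorphisms by recursion on $n$. For the base case, the universal property of the free group $\calg_0=F(X)$ applied to $f:X\to G$ produces a unique group homomorphism $\bar f_0:\calg_0\to G$ with $\bar f_0|_X=f$. For the inductive step, assume $\bar f_n:\calg_n\to G$ has been defined. Define a set map
\begin{equation*}
 \varphi_{n+1}:X\sqcup\lc\calg_n\rc\to G,\qquad \varphi_{n+1}(x):=f(x),\ \ \varphi_{n+1}(\lc w\rc):=P(\bar f_n(w)),
\end{equation*}
and extend it by the universal property of the free group $\calg_{n+1}=F(X\sqcup\lc\calg_n\rc)$ to a unique group homomorphism $\bar f_{n+1}:\calg_{n+1}\to G$.

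Next I would check that $\bar f_{n+1}\circ i_{n,n+1}=\bar f_n$. Both sides are group homomorphisms on $\calg_n=F(X\sqcup\lc\calg_{n-1}\rc)$, so by the universal property it suffices to compare them on the generating set. On $x\in X$ both sides give $f(x)$. On $\lc w\rc$ with $w\in\calg_{n-1}$, using \eqref{eq:btrans} and the inductive hypothesis $\bar f_n\circ i_{n-1,n}=\bar f_{n-1}$, one gets $\bar f_{n+1}(i_{n,n+1}(\lc w\rc))=\bar f_{n+1}(\lc i_{n-1,n}(w)\rc)=P(\bar f_n(i_{n-1,n}(w)))=P(\bar f_{n-1}(w))=\bar f_n(\lc w\rc)$. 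Thus the family is compatible, and the universal property of the direct limit $\calg=\dirlim\calg_n$ yields a unique group homomorphism $\bar f:\calg\to G$ with $\bar f\circ i_n=\bar f_n$ for every $n$, in particular $\bar f\circ j_X=f$.

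To see that $\bar f$ is a homomorphism of operated groups, take $u\in\calg$, choose $n$ with $u\in\calg_n$, and compute $\bar f(P_X(u))=\bar f(\lc u\rc)=\bar f_{n+1}(\lc u\rc)=P(\bar f_n(u))=P(\bar f(u))$, so $\bar f\circ P_X=P\circ\bar f$. For uniqueness, suppose $g:(\calg(X),P_X)\to(G,P)$ is any operated group homomorphism with $g\circ j_X=f$. Then $g|_{\calg_n}$ agrees with $\bar f_n$ by induction on $n$: at $n=0$ both are group homomorphisms from $F(X)$ extending $f$; at the inductive step they are group homomorphisms on $\calg_{n+1}=F(X\sqcup\lc\calg_n\rc)$ that agree on $X$ and satisfy $g(\lc w\rc)=g(P_X(w))=P(g(w))=P(\bar f_n(w))=\bar f_{n+1}(\lc w\rc)$ for $w\in\calg_n$, hence coincide on the generators. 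Therefore $g=\bar f$.

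The only real subtlety is the compatibility step $\bar f_{n+1}\circ i_{n,n+1}=\bar f_n$, which depends on the definition \eqref{eq:btrans} of the transition maps on the bracketed generators being the one induced by $i_{n-1,n}$; apart from this bookkeeping, everything is a straightforward application of the universal properties of free groups and direct limits, so I do not anticipate any genuine obstacle.
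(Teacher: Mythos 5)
Your proposal is correct and follows essentially the same route as the paper's proof: the same recursive construction of compatible homomorphisms $\bar f_n$ via the universal properties of the free groups $\calg_n$, the same verification of $\bar f_{n+1}\circ i_{n,n+1}=\bar f_n$ on generators using Eq.~\eqref{eq:btrans}, passage to the direct limit, and uniqueness by induction along the $\calg_n$. If anything, your write-up states the key operator-compatibility relation $\bar f_{n+1}(\lc w\rc)=P(\bar f_n(w))$ more cleanly than the paper's displayed version, so no changes are needed.
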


\begin{proof}
We just need to verify that the operated group $(\calg(X),P_X)$ together with $j_X$ satisfies the desired universal property of a free operated group on $X$. For this purpose, let $(G,P)$ be a given operated group and $f:X\to G$ be a map. We will apply a recursion to define a sequence of group homomorphisms
$$ f_n: \calg_n \to G, n\geq 0, $$
	that yield a group homomorphism $\free{f}: \calg\to G$ by taking the direct limit.
	
	First define $f_0:\calg_0=F(X)\to G$ by the universal property of the free group $F(X)$ on $X$ such that $f_0j_X=f$. We then define a map
	$$\hat{f}_0:\lc \calg_0\rc \to G, \quad
	\hat{f}_0(\lc w\rc):= P(f_0(w))\quad \text{for all } w\in \calg_0.$$
	We then define a group homomorphism
	$f_1: \calg_1:=F(X\sqcup \lc \calg_0\rc) \to G$
	by the universal property of the free group $\calg_1$ induced by the set map
	$$X\sqcup \lc \calg_0\rc \to G,\quad x\mapsto f(x), \lc w\rc \mapsto \hat{f}_0(\lc w\rc ) \tforall x\in X, \lc w\rc\in \lc \calg_0\rc.$$
	Then we have
	$f_1 i_{0,1}=f_0$.
	
Recursively, for any given integer $n\geq 1$, suppose that, for $i\leq n$, the group homomorphisms $f_i:\calg_i\to G$ have been defined such that $f_i i_{i-1,i}=f_{i-1}$. We then define
	$$\hat{f}_n: \lc \calg_n\rc \to G, \quad \lc w\rc \mapsto P(f_n(w)) \tforall w\in \calg_n,$$
	and then extend it to a map
	$$ X\sqcup \lc \calg_n\rc \to G$$
	by sending $x\in X$ to $f(x)$. We then apply the universal property of the free group $\calg_{n+1}:=F(X\sqcup \lc \calg_n\rc)$ and obtain the group homomorphism
	$$ f_{n+1}: \calg_{n+1} \to G.$$
	Furthermore, to show that $f_{n+1}i_{n,n+1}=f_n$, we just need to check
	\begin{equation} f_{n+1}i_{n,n+1}(w)=f_n(w) \tforall w\in X\sqcup \lc \calg_{n-1} \rc.
		\mlabel{eq:frest}
	\end{equation}
This is clear for $w\in X$ since both sides equal to $f(x)$. For $w\in \lc \calg_{n-1}\rc$, we write $w=\lc \lbar{w}\rc$ for $\lbar{w}\in \calg_{n-1}$. Then by Eq.~\meqref{eq:btrans} we obtain
$$i_{n,n+1}(w)=i_{n,n+1}(\lc \lbar{w}\rc) = \lc i_{n-1,n}(w)\rc$$
and hence by Eq.~\meqref{eq:btrans},
$$f_{n+1}i_{n,n+1}(w)=f_{n+1}(\lc i_{n-1,n}(\lbar{w})\rc)
= \hat{f}_n(\lc i_{n-1,n}(\lbar{w})\rc)$$
$$=P(f_ni_{n-1,n}(\lbar{w}))
=P(f_{n-1}(\lbar{w})) = f_n(\lc \lbar{w}\rc) =f_n(w).$$
This proves Eq.~\meqref{eq:frest}, as needed.
	
Thus we can take the direct limit of $f_n, n\geq 1,$ and obtain a group homomorphism
$$\free{f}=\dirlim f_n: \calg \to G.$$
	By construction, we have $\free{f} j_X =f$. The relation
	$$ f_{n+1}(\lc w\rc)= \lc f_n(w)\rc \tforall w\in \calg_n,$$
	gives $\free{f}P_X=P\free{f}$. Therefore $\free{f}:(\calg(X),P_X) \to (G,P)$ is a homomorphism of operated groups.
	
Finally, by the construction of $\free{f}$, it is the unique homomorphism of operated group homomorphism such that $\free{f}j_X=f$. More precisely, let $\free{f}':(\calg,P_X)\to (G,P)$ be another homomorphism of operated algebra homomorphisms such that $\free{f}' j_X=f$. Then we recursively verify that $\free{f} i_n = \free{f}' i_n, n\geq 0,$ for the group homomorphisms $i_n:\calg_n\to\calg$ in Eq.~\meqref{eq:nind}. This gives $\free{f}'=\free{f}$. Therefore the desired universal property is verified.
\end{proof}

Free operated semigroups have been constructed in~\mcite{Guop} in terms of bracketed words from $X$. One can think of elements of $\calg(X)$ as reduced bracketed words from $X\cup X^{-1}$. Here a bracketed word is reduced if it does not have adjacent pair of subwords that are the inverses of each other. It would be interesting to obtain combinatorial construction of free operated groups in terms of path or trees as in the case of operated semigroups~\mcite{Guop}, or in terms of Cayley graphs for free groups, which are fundamental in geometric group theory~\mcite{BFH1,BFH2}. 

\section{Free differential groups}
\mlabel{sec:difgp}

With the notion of a differential group given in Definition~\mref{de:diffgp}, we now provide some properties and then construct free differential groups.

\begin{lem} \mlabel{lem:asso}
Let $(G,d)$ be a differential group.
\begin{enumerate}
	\item
\mlabel{it:diffassoc}
The derivation $d$ is compatible with the associativity of $G$. More precisely, for $g,h,k\in G$, the results of $d((gh)k)$ and $d(g(hk))$ agree. In fact, we have
\begin{equation}\mlabel{eq:asso}
d(ghk)=d(g)gd(h)hd(k)h^{-1}g^{-1} \quad \text{for all } g, h, k\in G.
\end{equation}
\item
\mlabel{it:prod}
More generally, for $g_1,\ldots,g_n\in G, n\geq 2$, we have
\begin{equation} \notag
 \mlabel{eq:prod}
 d(g_1\cdots g_n)=\Big(\prod_{i=1}^n (d(g_i)g_i)\Big) \big(g_1\cdots g_n\big)^{-1} \quad \text{for all }g_1,\ldots,g_n\in G.
\end{equation}
\end{enumerate}
\end{lem}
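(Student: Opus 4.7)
The plan is to reduce both parts to direct applications of the defining identity $d(gh) = d(g)\,\Ad_g d(h) = d(g)\,g\,d(h)\,g^{-1}$.

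For part (i), I would compute $d((gh)k)$ and $d(g(hk))$ separately and observe that both collapse to the same expression. Applying the defining identity to the outer product in $d((gh)k)$ gives $d(gh)\,\Ad_{gh}d(k)$; then expanding $d(gh)$ and $\Ad_{gh}$ in terms of $g$ and $h$ and simplifying $g\,d(h)\,g^{-1}\cdot gh\,d(k)\,h^{-1}g^{-1}$ produces the right-hand side of~\meqref{eq:asso}. Applying the identity instead to $d(g(hk))$ gives $d(g)\,\Ad_g(d(h)\,\Ad_h d(k))$, and expanding the inner $\Ad_g$ again yields the same expression. Thus the associativity of $G$ is compatible with $d$, and the explicit formula~\meqref{eq:asso} follows.

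For part (ii), I would proceed by induction on $n\geq 2$. The base case $n=2$ is exactly the defining identity, after rewriting the right-hand side of the claimed formula as $d(g_1)g_1d(g_2)g_2(g_1g_2)^{-1} = d(g_1)\,\Ad_{g_1}d(g_2)$. For the inductive step, write $g_1\cdots g_n = (g_1\cdots g_{n-1})g_n$, apply the defining identity once to obtain
\[
d(g_1\cdots g_n) = d(g_1\cdots g_{n-1})\,\Ad_{g_1\cdots g_{n-1}} d(g_n),
\]
substitute the induction hypothesis for $d(g_1\cdots g_{n-1})$, and then observe that the trailing factor $(g_1\cdots g_{n-1})^{-1}$ produced by the hypothesis cancels against the leading $g_1\cdots g_{n-1}$ coming from $\Ad_{g_1\cdots g_{n-1}}$, leaving exactly $d(g_n)g_n(g_1\cdots g_n)^{-1}$ appended to the previous product. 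This gives the claimed formula.

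There is no real obstacle here; the only thing to be careful about is bookkeeping of the conjugations, in particular tracking how the inverse $(g_1\cdots g_{n-1})^{-1}$ telescopes with the conjugating element in the inductive step. Since part (i) already serves as a sanity check at $n=3$, I would use it implicitly as a small base case to make the induction more transparent if desired.
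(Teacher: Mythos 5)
Your proposal is correct and follows essentially the same route as the paper: part (i) by expanding $d((gh)k)$ and $d(g(hk))$ directly via the defining identity $d(gh)=d(g)\,g\,d(h)\,g^{-1}$ and observing that both reduce to the right-hand side of Eq.~\meqref{eq:asso}, and part (ii) by the straightforward induction on $n$ (which the paper merely asserts, while you correctly spell out the telescoping cancellation of $(g_1\cdots g_{n-1})^{-1}$ against the conjugating factor). No gaps; your bookkeeping of the conjugations checks out.
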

\begin{proof}
\meqref{it:diffassoc}
We directly check that
$$ d((gh)k)=d(gh)ghd(k)(gh)^{-1}=d(g)gd(h)g^{-1}ghd(k)(gh)^{-1}
=d(g)gd(h)hd(k)h^{-1}k^{-1}.
$$
Also
$$ d(g(hk))=d(g)gd(hk)g^{-1}=d(g)gd(h)hd(k)h^{-1}g^{-1}.
$$
Hence Eq.~\meqref{eq:asso} holds.

\smallskip

\noindent
\meqref{it:prod}
The general equation follows from a simple induction on $n$.
\end{proof}

\begin{lem}
\mlabel{lem:unitinv}
Let $(G,d)$ be a differential group. Then we have
\begin{equation}
\mlabel{eq:unitinv}
d(1)=1, \quad d(g^{-n})=\big(g^{-1} d(g)^{-1}\big)^n g^n \tforall g\in G, n\geq 1.
\end{equation}
\end{lem}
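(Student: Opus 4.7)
The plan is to reduce everything to two base computations plus the product formula of Lemma~\ref{lem:asso}(ii).

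First I would obtain $d(1)=1$ by substituting $g=h=1$ into the defining identity~\meqref{eq:diffgp}. Since $\Ad_1$ is the identity map, this yields $d(1)=d(1)d(1)$, and cancelling in the group gives $d(1)=1$.

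Next I would derive the formula for $n=1$, namely $d(g^{-1})=g^{-1}d(g)^{-1}g$. For this, apply~\meqref{eq:diffgp} to the pair $(g,g^{-1})$:
\[ 1=d(1)=d(gg^{-1})=d(g)\,\Ad_g d(g^{-1})=d(g)\cdot g\,d(g^{-1})\,g^{-1}, \]
and then solve for $d(g^{-1})$. Observe that this matches the stated formula at $n=1$, since $(g^{-1}d(g)^{-1})^1 g^1 = g^{-1}d(g)^{-1}g$.

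For general $n\geq 1$, I would apply the multi-factor product rule of Lemma~\ref{lem:asso}(ii) with all $n$ factors equal to $g^{-1}$. This gives
\[ d(g^{-n})=\Big(\prod_{i=1}^n d(g^{-1})g^{-1}\Big)(g^{-n})^{-1}=\bigl(d(g^{-1})g^{-1}\bigr)^n g^n. \]
Substituting the value of $d(g^{-1})$ obtained in the previous step, one computes $d(g^{-1})g^{-1}=g^{-1}d(g)^{-1}g\cdot g^{-1}=g^{-1}d(g)^{-1}$, which instantly yields the desired identity $d(g^{-n})=(g^{-1}d(g)^{-1})^n g^n$.

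There is no real obstacle here; the argument is entirely routine manipulation of the defining relation together with the already-established product formula. The only minor pitfall is being careful with the order of factors under the adjoint action when solving for $d(g^{-1})$, but this is a short calculation inside the group.
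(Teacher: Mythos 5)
Your proof is correct, and the base steps coincide with the paper's: you get $d(1)=1$ from $d(1)=d(1)d(1)$ by cancellation, and $d(g^{-1})=g^{-1}d(g)^{-1}g$ by applying Eq.~\meqref{eq:diffgp} to $gg^{-1}=1$, exactly as the paper does. You diverge at the general case: the paper runs a fresh induction on $n$, writing $g^{-(k+1)}=g^{-1}g^{-k}$ and applying Eq.~\meqref{eq:diffgp} once per step, whereas you invoke the multi-factor product rule of Lemma~\mref{lem:asso}\,\meqref{it:prod} with all $n$ factors equal to $g^{-1}$ to get $d(g^{-n})=\big(d(g^{-1})g^{-1}\big)^n g^n$, and then simplify $d(g^{-1})g^{-1}=g^{-1}d(g)^{-1}$. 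Both arguments are valid; yours makes the lemma an essentially one-line corollary of the product formula, with the induction absorbed into Lemma~\mref{lem:asso}\,\meqref{it:prod} (whose proof the paper only sketches as ``a simple induction on $n$''), while the paper's version is self-contained and never uses that part of the lemma. You also correctly handle the minor range issue --- Lemma~\mref{lem:asso}\,\meqref{it:prod} is stated for $n\geq 2$, and you note the $n=1$ case separately and check consistency --- so there is no gap.
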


\begin{proof}
Denote $d(1)=a$. Then from
$$ a=d(1)=d(1^2)=d(1)d(1)=a^2,$$
we obtain $a=1$.
To prove the second equality, we apply induction on $n\geq 1$. Let $n=1$. Then for $g\in G$, we have
$$ 1=d(1)=d(g g^{-1})=d(g)gd(g^{-1})g^{-1},$$
which yields
$$ d(g^{-1})=g^{-1}d(g)^{-1}g. $$
For $k\geq 1$, assume
$$ d(g^{-k})=(g^{-1}d(g)^{-1})^kg^k.$$
Then we obtain
\begin{eqnarray*}
	d(g^{-(k+1)}) &=& d(g^{-1} g^{-k})\\
	&=& d(g^{-1})g^{-1} d(g^{-k})g\\
	&=& g^{-1}d(g)^{-1}gg^{-1} \big(g^{-1}d(g)^{-1}\big)^kg^kg\\
	&=& g^{-1}d(g)^{-1}\big(g^{-1}d(g)^{-1}\big)^k g^{k+1}\\
	&=& \big(g^{-1}d(g)^{-1}\big)^{k+1}g^{k+1}.
\end{eqnarray*}
This completes the induction.
\end{proof}

We now construct free differential groups by adding a differential structure to the classical construction of free groups.

\begin{defi}
The {\bf free differential group} on a set $X$ is a differential group $(F\{X\},D)$ together with a set map $i=i_X:X\to F\{X\}$ with the property that,
for any differential group $(G,d)$ and set map $f:X\to G$, there is a unique homomorphism of differential groups $\free{f}:(F\{X\},D_X) \to (G,d)$ such that $\free{f}i =f$.
\mlabel{de:freediffgp}
\end{defi}

As a side remark, the notion of a free endo group can be given in the same way and can be constructed in the same way as below. 

Let $X$ be a set. Denote
$$ \Delta X:= X\times \NN = \{x^{(n)}:=(x,n)\,|\, x\in X, n\in\NN\}.$$
Let
$$F\{X\}:=F(\Delta X)$$
denote the free group on the set $\Delta X$, as usual in terms of reduced words in the alphabet $\Delta X$ and their formal inverses.
We define an operator $D:=D_X$ on $F\{X\}$ in two steps. First define
$$D:\Delta X \to \Delta X, \quad x^{(n)}\mapsto x^{(n+1)}, x^{(n)}\in \Delta X=X\times \NN.$$
Then extend $D$ to $F\{X\}=F(\Delta X)$ by applying Eq.~\meqref{eq:diffgp}. More precisely, an element $\frakx$ of $F(\Delta X)$ can be uniquely expressed as a reduced word
$$\frakx:=\frakx_1\cdots \frakx_k, \quad \frakx_i\in \Delta X\cup (\Delta X)^{-1},$$
with the convention that the length of the empty word $\frakx=1$ is $0$.
When $k=0$, we define $D(1)=1$. When $k=1$, we define \begin{equation} \notag
D(\frakx):=D(x^{(n)})=x^{(n+1)}, \quad D(\frakx^{-1}):=D((x^{(n)})^{-1})=(x^{(n)})^{-1}(x^{(n+1)})^{-1} x^{(n)}, \quad n\geq 0.
\mlabel{eq:derpower}
\end{equation}
For a given integer $m\geq 1$, assume that $D(\frakx)$ has been defined for all reduced words $\frakx$ of length $k=m$. Then for a reduced word $\frakx=\frakx_1\cdots \frakx_{m+1}$ of length $m+1$, we recursively define

\begin{equation} \mlabel{eq:freed}
D(\frakx)=D(\frakx_1)\frakx_1 D(\frakx_2\cdots \frakx_{m+1}) \frakx_1^{-1}.
\end{equation}

Let
$$i_X:X\to \Delta X \to F\{X\}, \quad x\mapsto x^{(0)}, x\in X,$$
be the canonical inclusion.

\begin{thm} \mlabel{thm:freedgp}
Let $X$ be a set.
 \begin{enumerate}
\item
The pair $(F\{X\},D_X)$ is a differential group.
\mlabel{it:freedgp1}
\item
The triple $(F\{X\},D_X,i_X)$ is a free differential group on $X$.
\mlabel{it:freedgp2}
\end{enumerate}
\end{thm}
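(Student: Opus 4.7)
The plan is to establish (i) and (ii) in sequence, with the bulk of the work in (i), since the universal property in (ii) follows quickly once $(F\{X\},D_X)$ is known to be a differential group.

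For (i), the recursion \meqref{eq:freed} a priori only defines $D_X$ on reduced expressions, so the first task is to promote it to a genuine operator on $F\{X\}$. The cleanest route is to introduce an auxiliary map $\tilde D$ on the set of all formal (not necessarily reduced) words in the alphabet $\Delta X \sqcup (\Delta X)^{-1}$, defined by the same recursion together with $\tilde D(1)=1$, and to prove by induction on the length of the left factor that
$$ \tilde D(uv) \;=\; \tilde D(u)\,\Ad_{u}\tilde D(v) $$
for all formal words $u,v$. The base case is immediate from the recursion when $u$ is a single letter, and the inductive step is a short algebraic manipulation of the same shape as the one proving Lemma~\mref{lem:asso}. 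I would then check the reduction-invariance $\tilde D(u\, x\, x^{-1}\, v) = \tilde D(uv)$, which, granted the differential identity for $\tilde D$, reduces to verifying $\tilde D(xx^{-1})=1$ for every single letter $x \in \Delta X \sqcup (\Delta X)^{-1}$. The explicit formulas for $D_X$ on a letter and its inverse make this direct: for $x = x^{(n)}$ the product $\tilde D(x)\Ad_{x}\tilde D(x^{-1})$ collapses after expanding the definitions, and the case $x = (x^{(n)})^{-1}$ is symmetric. Consequently $\tilde D$ descends to a well-defined map $D_X$ on $F\{X\}$ that inherits the differential identity from $\tilde D$, proving (i).

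For (ii), extend $f:X\to G$ first to $\Delta X$ by $x^{(n)} \mapsto d^n(f(x))$ and then to $F\{X\}$ by the universal property of the free group on $\Delta X$; call the result $\free f$. The identity $\free f\, i_X = f$ is immediate. To verify $\free f\, D_X = d\, \free f$ I would handle the generators first: on $x^{(n)}$ this is just the definition, while on $(x^{(n)})^{-1}$ the required identity
$$ d\bigl((d^n f(x))^{-1}\bigr) \;=\; (d^n f(x))^{-1}(d^{n+1}f(x))^{-1}\, d^n f(x) $$
is exactly the $n=1$ case of Lemma~\mref{lem:unitinv} applied to $g := d^n f(x)$. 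The extension to arbitrary reduced words then proceeds by a length induction using the recursion \meqref{eq:freed} together with the differential identity for $d$, in essentially the same pattern as in (i). Uniqueness follows from the fact that $\Delta X$ generates $F\{X\}$ as a group, combined with $D_X^n(x^{(0)}) = x^{(n)}$, which forces $\varphi(x^{(n)}) = d^n(f(x))$ for any compatible homomorphism $\varphi$.

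The main obstacle is the reduction-invariance step in (i): because \meqref{eq:freed} is only stated for reduced words, a direct verification of $D_X(gh)=D_X(g)\Ad_g D_X(h)$ on $F\{X\}$ would require a delicate case analysis depending on how the reduced forms of $g$ and $h$ cancel against each other. The $\tilde D$-approach circumvents this by first establishing the differential identity on formal words, where no cancellations occur, and then passing to the quotient by a single localized check for the elementary reduction $x x^{-1}\mapsto 1$.
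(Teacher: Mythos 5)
Your proposal is correct, and for part (i) it takes a genuinely different---and more careful---route than the paper. The paper defines $D$ only on reduced words and proves Eq.~\meqref{eq:diffgp} by a single induction on the total length of $g$ and $h$, writing $D(\frakx_1 g_1 h)=D(\frakx_1)\frakx_1 D(g_1 h)\frakx_1^{-1}$ as an instance of the recursion \meqref{eq:freed}; this silently applies the recursion even when the concatenation $\frakx_1\cdot(g_1h)$ is \emph{not} reduced (e.g.\ when $g=\frakx_1$ and $h$ begins with $\frakx_1^{-1}$), which is precisely the cancellation subtlety you isolate in your final paragraph. Your detour through the free monoid---proving $\tilde D(uv)=\tilde D(u)\,\Ad_u\tilde D(v)$ for formal concatenations, where the induction on the left factor runs with no case analysis, and then descending along elementary reductions via the single local check $\tilde D(xx^{-1})=1$, which indeed collapses for both orientations of the letter---establishes well-definedness and the crossed-homomorphism identity simultaneously, at the modest cost of the auxiliary map (one presentational point: the invariance $\tilde D(u\,xx^{-1}\,v)=\tilde D(uv)$ must be read as an equality in $F\{X\}$, i.e.\ after reduction, which is implicit in your write-up). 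The paper's version buys brevity; to make it airtight one would have to split off exactly the cases your $\tilde D(xx^{-1})=1$ computation handles. For part (ii) your argument coincides with the paper's: the same extension $x^{(n)}\mapsto d^n(f(x))$, the same length induction via \meqref{eq:freed}, and the same generation argument for uniqueness (your observation $D_X^n(x^{(0)})=x^{(n)}$ is the explicit form of it); you additionally spell out the inverse-generator base case, which the paper dismisses as ``direct to check'' and which, as you correctly note, is the $n=1$ case of Lemma~\mref{lem:unitinv} applied to $g=d^nf(x)$.
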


\begin{proof}
\eqref{it:freedgp1}
We just need to verify that $D$ is a differential operator on the group $F\{X\}$, that is, Eq.~\meqref{eq:diffgp} holds for all $g, h\in F\{X\}$. This can be done by an induction on the sum $m+n$ of the lengths $m$ and $n$ of $g$ and $h$ as follows.

First consider $m+n=0$. Then $g=h=1$ and so
$$ D(gh)=D(1)=1= D(g)gD(h)g^{-1}.$$
Thus Eq.~\meqref{eq:diffgp} holds. Let $k\geq 0$ be given. Assume that Eq.~\meqref{eq:diffgp} holds for all $g, h\in F\{X\}$ with $m+n=k$ and consider $g, h\in F\{X\}$ with $m+n=k+1$. So either $m>0$ or $n>0$. First suppose $m>0$. Then $g=\frakx_1 g_1$ for $\frakx_1\in \Delta X, g_1\in F\{X\}$. Then
$$ D(gh)=D(\frakx_1g_1h)=D(\frakx_1) \frakx_1 D(g_1h)\frakx_1^{-1}.$$
Also,
$$ D(g)gD(h)g^{-1}=D(\frakx_1)\frakx_1D(g_1)\frakx_1^{-1} gD(h)g^{-1}
= D(\frakx_1)\frakx_1 D(g_1) g_1D(h) g_1^{-1} \frakx_1^{-1}
= D(\frakx_1)\frakx_1 D(g_1h)\frakx_1^{-1},$$
where the last step utilizes the induction hypothesis. The case of $n>0$ follows from the same argument. This completes the induction.

\smallskip
\noindent
\meqref{it:freedgp2}
We just need to verify that the differential group $(F\{X\},D)$ together with $i_X:X\to F\{X\}$ satisfies the expected universal property stated in the theorem.
Now let a differential group $(G,d)$ and a map $f:X\to G$ be given. Define a differential group homomorphism $\free{f}:F\{X\}\to G$ by first taking
$$ \hat{f}:\Delta X\to G, \quad x^{(n)}\mapsto d^n(f(x)) \tforall x^{(n)}\in \Delta X,$$
and then taking
$$ \free{f}: F\{X\}=F(\Delta X) \to G$$
to be the unique group homomorphism from $\hat{f}$ by the universal property of the free group $F(\Delta X)$ on $\Delta X$.

We are left to verify that $\free{f}$ is a homomorphism of differential groups satisfying $\free{f}i=f$ and is the unique one with this property. For the first property, we just need to check that $\free{f}$ is compatible with the differential operators: $\free{f} D=d\free{f}$ which is proved by an induction on the length by applying Eq.~\meqref{eq:freed}. Indeed, it is direct to check that  $\free{f}D(g)=d\free{f}(g)$ if $g=1$ or if $g$ is in $\Delta X$.  Further if $g=\frakx_1g_1$ with $\frakx_1\in \Delta X$, then applying the induction hypothesis to $g_1$, we obtain
\begin{eqnarray*}
\free{f}D(\frakx)&=&\free{f}D(\frakx_1g_1)
= \free{f}\big(D(\frakx_1)\frakx_1 D(g_1)\frakx_1^{-1}\big)
= \free{f}D(\frakx_1)\free{f}(\frakx_1)\free{f}D(g_1)\free{f}(\frakx_1^{-1})\\
&=&D(\free{f}(\frakx_1)) \free{f}(\frakx_1)D(\free{f}(g_1))\free{f}(\frakx_1)^{-1}
=D\big(\free{f}(\frakx_1) \free{f}(g_1)\big)
=D\free{f}(g).
\end{eqnarray*}
This completes the induction.

The construction of $\free{f}$ shows that it is the only way to obtain a differential group homomorphism, showing the uniqueness of $\free{f}$.
\end{proof}

\section{Free Rota-Baxter groups}
\mlabel{sec:rbgp}

We now construct the free Rota-Baxter group generated by a set. To be specific, we first give the definition.

\begin{defi}
Let $X$ be a set. The \name{free Rota-Baxter group} generated by $X$ is a Rota-Baxter group $(\frakR(X),B_X)$ together with a map $j_X:X\to \frakR(X)$ such that, for any Rota-Baxter group $(G,B)$ and map $f:X\to G$, there is a unique homomorphism $\free{f}:(\frakR(X),B_X)\to (G,B)$ of Rota-Baxter groups such that $\free{f}j_X=f$.
\end{defi}

From the construction of free operated groups obtained in Section~\mref{ss:freeopgp}, the free Rota-Baxter group can be obtained by taking a quotient as follows.

Let a set $X$ be given and let $(\calg(X),P_X)$ be the free operated group generated by $X$ constructed in Theorem~\mref{thm:freeopgp}. Let $N\coloneqq N_X$ be the normal operated subgroup of $\calg(X)$ generated by elements of the form
$$ P_X(u)P_X(v) \Big(P_X(u\Ad_{P_X(u)}v)\Big)^{-1}
\tforall u, v\in \calg(X).$$
Then the quotient $\calg(X)/N$, with its operator induced from $P_X$ modulo $N$, is automatically a free Rota-Baxter group generated by $X$. We next give a direct and explicit construction of the free Rota-Baxter group on $X$, by displaying a canonical subset of $\calg(X)$ and equipping it with the desired Rota-Baxter group structure. The construction is inspired by the construction of free Rota-Baxter (associative) algebras~\mcite{Gub}.

\begin{defi}
An element $w\in\calg(X)$ is called a \name{Rota-Baxter group word} on $X$ if $w$ contains no subword $\lc u\rc\lc v\rc$ with $u, v\in \calg(X)$.
Denote by $\rbgw$ the set of all Rota-Baxter group word on $X$.
\end{defi}

In terms of the standard factorization in Eq.~\meqref{eq:frbg}, each Rota-Baxter group word may be uniquely written as
\begin{equation}
	w=w_{1}\cdots w_{k}
	\mlabel{eq:stform}
\end{equation}
where $w_{i}\in X^{\pm 1}\sqcup\lc \calg(X)\rc^{\pm 1}$ and no adjacent $w_i$ and $w_{i+1}$ are both in $\lc \calg(X)\rc$ or both in $\lc \calg(X)\rc^{-1}$. Note that it is okay if one of $w_i$ or $w_{i+1}$ is in $\lc \calg(X)\rc$ and the other one in $\lc \calg(X)\rc^{-1}$.
We call Eq.~\eqref{eq:stform} the {\bf standard factorization} of $w$. 

We are going to define a multiplication $\diamond$ on $\rbgw$. First we define $1$ to be the identity: $1\diamond u=u\diamond 1=u$ for all $u\in \rbgw$. For $u, v\in \rbgw\backslash \{1\}$, we apply induction on $(\dep(u), \dep(v))\geq (0,0)$ lexicographically.

Let $m,n\geq 0$ be given such that $(m,n)>(0,0)$ lexicographically. Suppose that the product $u\diamond v$ have been defined for $u, v\in \rbgw$ with $(\dep(u), \dep(v))<(m,n)$ lexicographically and consider $u, v\in \rbgw$ with $(\dep(u), \dep(v))=(m,n)$.
We have the following four cases to check, with the first three cases dealing with $\bre(u)=\bre(v)=1$.

\noindent{\bf Case 1.} Let $u=\lc \lbar{u}\rc$ and $v=\lc \lbar{v}\rc$ for some $\lbar{u}$ and $\lbar{v}\in \rbgw$.
Write $\lbar{v} = \lbar{v}_1\cdots \lbar{v}_k$ in the standard factorization. Then the expression
\begin{align}
	\AD_u \lbar{v} :=
	\begin{cases}
		(u \diamond \lbar{v}_1) \lbar{v}_2 \cdots \lbar{v}_{k-1} (u\diamond \lbar{v}'_k)^{-1} , & \text{ if } \lbar{v}_k = (\lbar{v}'_k)^{-1} \in \lc \rbgw\rc^{-1}, \\
		(u \diamond \lbar{v}) u^{-1}, & \mbox{otherwise}, \\
	\end{cases}
	\mlabel{eq:diam}
\end{align}
is well defined by the induction hypothesis on $(\dep(u),\dep(v))$. In particular if $k=1$, then
\begin{align} \notag
	\AD_u \lbar{v} =
	\begin{cases}
		u (u\diamond \lbar{v}')^{-1} , & \text{ if } \lbar{v} = (\lbar{v}')^{-1} \in \lc \rbgw\rc^{-1}, \\
		(u \diamond \lbar{v})u^{-1}, & \mbox{otherwise}. \\
	\end{cases}
\end{align}
Then we define
\begin{align}
u\diamond v := \lc\free{u} \diamond \AD_{u} \lbar{v} \rc.
\mlabel{eq:dia}
\end{align}

\noindent{\bf Case 2.}  Let $u= \lc \lbar{u}\rc^{-1}$ and $v=\lc \lbar{v}\rc^{-1}$ for some $\lbar{u}, \lbar{v}\in \rbgw$.
Then define
\begin{equation}
u\diamond v := (\lc \lbar{v}\rc \diamond \lc \lbar{u}\rc)^{-1},
\mlabel{eq:uvconc}
\end{equation}
where $\lc \lbar{v}\rc \diamond \lc \lbar{u}\rc$ is reduced to Case 1.

\noindent{\bf Case 3.} Let $u$ and $v$ have breadth 1, but be not in the above two cases. Then the concatenation $uv$ is still in $\rbgw$. Then we can define
\begin{equation}
u\diamond v:= uv.
\mlabel{eq:conc}
\end{equation}

\noindent
{\bf Case 4.}
In general, write $u=u_{1}\cdots u_{k}$ and $v=v_{1}\cdots v_{\ell}$ with $k,\ell\geq 1$ in the standard forms. Define
\begin{align}
u\diamond v := u_{1} \cdots u_{k-1} (u_{k}\diamond v_{1}) v_{2}\cdots v_{\ell},
\mlabel{eq:stdia}
\end{align}
where $u_{k}\diamond v_{1}$ is defined by Cases 1, 2, 3.
Notice that $1\diamond u = u=u\diamond 1 $ for $u\in \calg$.

Note that by the construction of $\rbgw$, the set $\rbgw$ is closed under taking the bracket: for $w\in \rbgw$, we have $\lc w \rc\in \rbgw$.

\begin{thm}
With the above definition of $\diamond$, the triple $(\rbgw,\diamond,\rbgo)$ is a Rota-Baxter group of weight 1.
\mlabel{thm:rtgp1}
\end{thm}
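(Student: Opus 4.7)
The plan is to verify the group axioms for $(\rbgw, \diamond, 1)$ and then check the Rota-Baxter relation. The identity $1\diamond u = u = u\diamond 1$ is built into the definition. For inverses, one observes that if $w=w_1\cdots w_k$ is in standard form, then $w^{-1} := w_k^{-1}\cdots w_1^{-1}$ is again a Rota-Baxter group word (no forbidden adjacent pair $\lc \lbar a\rc\lc\lbar b\rc$ can appear after reversal if none did before), and the equations $w\diamond w^{-1} = 1 = w^{-1}\diamond w$ reduce by Case~4 to breadth-one verifications which are immediate from Cases~1--3 together with the cancellations inherent in $\calg(X)$.

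The heart of the proof is associativity. I would establish $(u\diamond v)\diamond w = u\diamond (v\diamond w)$ by induction on $(\dep(u)+\dep(v)+\dep(w),\ \bre(u)+\bre(v)+\bre(w))$ ordered lexicographically. Case~4 reduces the problem to $u, v, w$ all of breadth one, after checking that the interior groupings agree (which is the inductive hypothesis on strictly smaller breadth). In the breadth-one step, if any of $u, v, w$ is a letter in $X^{\pm 1}$ or if none involves a bracket at all, associativity follows from direct concatenation via Case~3. The essential subcase is when $u, v, w$ all lie in $\lc\calg(X)\rc$: unwinding Case~1 twice on each side produces
\begin{equation*}
(u\diamond v)\diamond w = \lc (\lbar u \diamond \AD_u \lbar v)\diamond \AD_{u\diamond v}\lbar w\rc, \qquad u\diamond(v\diamond w) = \lc \lbar u \diamond \AD_u(\lbar v \diamond \AD_v \lbar w)\rc,
\end{equation*}
and the two sides agree by the inductive associativity at strictly smaller depth combined with the auxiliary identity $\AD_{u\diamond v}\lbar w = \AD_u(\AD_v \lbar w)$. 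This multiplicativity of $\AD$ must be proved in parallel, by the same induction, directly from Eq.~\meqref{eq:diam}, by working through the two clauses according to whether the last letter of $\lbar w$ is an inverse bracket or not. The mixed subcases involving $\lc\calg(X)\rc^{-1}$ (Case~2) are handled by taking inverses and reducing to the pure bracket subcase via $(a\diamond b)^{-1} = b^{-1}\diamond a^{-1}$, a compatibility that must also be threaded through the induction.

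With associativity in hand, $(\rbgw, \diamond)$ is a group, and the Rota-Baxter relation $\lc u\rc \diamond \lc v\rc = \lc u\diamond \Ad_{\lc u\rc} v\rc$ is essentially built into Case~1 of the definition: the left-hand side equals $\lc u \diamond \AD_{\lc u\rc} v\rc$ by fiat, and a final short induction on the breadth of $v$ (using its standard form) confirms that the formal operator $\AD_{\lc u\rc}$ coincides with the group-theoretic conjugation $x\mapsto \lc u\rc\diamond x\diamond \lc u\rc^{-1}$ in the established group $(\rbgw,\diamond)$. The main obstacle throughout is the all-brackets subcase of associativity, where the recursive Case~1 definition interacts with itself and the induction on depth must be tightly coupled to the parallel proof of multiplicativity of $\AD$; the remaining verifications are bookkeeping around the reduction built into Case~4.
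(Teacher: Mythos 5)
Your skeleton is the same as the paper's: identity and inverses are immediate, associativity is the heart and is proved by a double induction on breadth and depth with the all-brackets case as the crux, and the Rota-Baxter relation is obtained at the end by showing that the formal operation $\AD_{\lc \lbar{u}\rc}$ coincides with conjugation $x\mapsto \lc\lbar{u}\rc\diamond x\diamond\lc\lbar{u}\rc^{-1}$ in $(\rbgw,\diamond)$ --- this last step is verbatim what the paper does. Your repackaging of the crux through the parallel lemma $\AD_{u\diamond v}\lbar{w}=\AD_u(\AD_v\lbar{w})$ is an attractive structural summary of what the paper instead verifies by pages of explicit expansion (its Cases 1.1--1.4, with insertions of $1=\lc\lbar{u}\rc^{-1}\lc\lbar{u}\rc$).

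However, your induction measure is genuinely broken. The product $\diamond$ strictly increases depth: for instance $\lc x\rc\diamond\lc y\rc=\lc x\lc x\rc y\lc x\rc^{-1}\rc$ has depth $2$ while both factors have depth $1$. Consequently, in the all-brackets case your appeal to ``inductive associativity at strictly smaller depth'' for the triple $\big(\lbar{u},\,\AD_u\lbar{v},\,\AD_{u\diamond v}\lbar{w}\big)$ is unjustified: taking $u=\lc x\rc$, $v=\lc y\rc$, $w=\lc z\rc$ (depth sum $3$, breadth sum $3$), the inner triple is $\big(x,\ \lc x\rc y\lc x\rc^{-1},\ (u\diamond v)z(u\diamond v)^{-1}\big)$ with depth sum $0+1+2=3$ and breadth sum $1+3+3=7$, so your lexicographic pair $(\text{total depth},\text{total breadth})$ does not decrease --- it increases --- and with deeper nesting the depth sum itself grows. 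The paper avoids this trap by a different well-order: an outer induction on $\bre(u)+\bre(v)+\bre(w)$, and in the breadth-one base case an inner induction on the \emph{ordered triple} $(\dep(u),\dep(v),\dep(w))$, arranged so that every inductive appeal keeps the first two coordinates fixed and strictly lowers the third (replacing $w$ by a factor $\lbar{w}_i$ of $\lbar{w}$); it never invokes an instance with enlarged later arguments, which is exactly what your single-shot unwinding forces. Your lemma-based route can likely be salvaged by inducting lexicographically on $(\dep(u),\dep(v),\dep(w),\bre(u)+\bre(v)+\bre(w))$, since then the crux instance wins on the first coordinate $\dep(\lbar{u})=\dep(u)-1$ no matter how large $\AD_u\lbar{v}$ and $\AD_{u\diamond v}\lbar{w}$ become, but this redesign, together with a re-justification of the Case~4 breadth reduction under the new order, is precisely the missing content: as written, the induction is not well-founded.
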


\begin{proof}
By the construction of $\rbgw$, we have $1\in \rbgw$ and $w^{-1}\in \rbgw$ provided $w\in \rbgw$. So we are left with checking the associativity of $\diamond$ and the Rota-Baxter relation in Eq.~\meqref{RBgroup}. 
 first verify the associativity of $\diamond$:
 \begin{equation}
(u\diamond v)\diamond w=u\diamond(v\diamond w) \,\text{ with }\, u, v, w\in \rbgw.
\mlabel{eq:as}
 \end{equation}
which will be accomplished by an induction on $\bre(u)+\bre(v)+\bre(w)\geq 3$.

\smallskip

\noindent
{\bf I. The initial step on $\bre(u)+\bre(v)+\bre(w) $.}
For the initial step of $\bre(u)+\bre(v)+\bre(w)=3$, we have $\bre(u)=\bre(v)=\bre(w)=1$. In this case, we apply the induction on $(\dep(u), \dep(v), \dep(w))\geq(0, 0, 0)$ lexicographically. If
$(\dep(u), \dep(v), \dep(w))= (0, 0, 0)$, then $u,v,w$ are in the free group $\calg_0$ and Eq.~(\mref{eq:as}) holds by the associativity of the free group $\calg_0$.

For the inductive step, let $p,q,r \geq 0$ be given such that $(p,q,r)>(0,0,0)$ lexicographically.
Assume that Eq.~(\ref{eq:as}) has been proved for $u, v, w\in \rbgw$ with $\bre(u)=\bre(v)=\bre(w)=1$ and $(\dep(u), \dep(v), \dep(w))<(p, q, r)$ lexicographically, and consider $u, v, w\in \rbgw$ with $\bre(u)=\bre(v)=\bre(w)=1$ and $(\dep(u), \dep(v), \dep(w))=(p, q, r)$.
Then according to whether all of $u, v, w$ are in $\lc \rbgw\rc$ or not, we consider the following four cases.

\noindent{\bf Case 1.} Suppose that all $u, v, w$ are in $\lc \rbgw\rc$. Then we take $u=\lc\lbar{u}\rc, v=\lc \lbar{v}\rc, w=\lc \lbar{w}\rc$ for some $\lbar{u}, \lbar{v}, \lbar{w} \in \rbgw$. Write $\lbar{v}=\lbar{v}_{1}\cdots \lbar{v}_{m}$ and $\lbar{w}=\lbar{w}_{1}\cdots \lbar{w_{n}}$ in the standard forms. We divide this case to four subcases according to whether $\lbar{v}_{m}$ and $\lbar{w}_{n}$ are in $\lc \rbgw\rc^{-1}$ or not.

\noindent{\bf Case 1.1.}  Suppose $\lbar{v}_{m}=(\lbar{v}_{m}')^{-1}, \lbar{w}_{n}=(\lbar{w}_{n}')^{-1}\in \lc \rbgw\rc^{-1}$ for some $\lbar{v}_{m}', \lbar{w}_{n}'\in \lc \rbgw\rc$. Then on the one hand,
\begin{align*}
&\ (u\diamond v)\diamond w \\
=&\ (\lc \lbar{u} \rc\diamond \lc \lbar{v} \rc)\diamond \lc \lbar{w} \rc \\
=&\ \Big\lc \lbar{u}\diamond \big((\lc \lbar{u} \rc\diamond \lbar{v}_{1})\lbar{v}_{2}\cdots \lbar{v}_{m-1}(\lc \lbar{u} \rc\diamond \lbar{v}_{m}')^{-1}\big)\Big\rc \diamond \lc \lbar{w} \rc \quad(\text{Eqs.~(\ref{eq:dia}) and~(\ref{eq:diam})})\\
=&\ \bigg\lc\Big(\lbar{u}\diamond \big((\lc \lbar{u} \rc\diamond \lbar{v}_{1})\lbar{v}_{2}\cdots \lbar{v}_{m-1}(\lc \lbar{u} \rc\diamond \lbar{v}_{m}')^{-1}\big)\Big)\diamond \Big(\big\lc \lbar{u} \diamond (\lc \lbar{u} \rc\diamond \lbar{v}_{1})\lbar{v}_{2}\cdots \lbar{v}_{m-1}(\lc \lbar{u} \rc\diamond \lbar{v}_{m}')^{-1}\big\rc\diamond \lbar{w}_{1}\Big) \\
&\ \lbar{w}_{2}\cdots \lbar{w}_{n-1}\big(\big\lc \lbar{u}\diamond (\lc \lbar{u} \rc\diamond \lbar{v}_1)\lbar{v}_{2}\cdots \lbar{v}_{m-1}(\lc \lbar{u} \rc\diamond \lbar{v}_{m}')^{-1}\big\rc\diamond \lbar{w}_{n}'\big)^{-1}\bigg\rc  \quad\quad(\text{Eqs.~(\ref{eq:dia}) and~(\ref{eq:diam})})\\
=&\ \bigg\lc\Big(\lbar{u}\diamond \big((\lc \lbar{u} \rc\diamond \lbar{v}_{1})\lbar{v}_{2}\cdots \lbar{v}_{m-1}(\lc \lbar{u} \rc\diamond \lbar{v}_{m}')^{-1}\big)\Big)  \Big(\big\lc \lbar{u} \diamond (\lc \lbar{u} \rc\diamond \lbar{v}_{1})\lbar{v}_{2}\cdots \lbar{v}_{m-1}(\lc \lbar{u} \rc\diamond \lbar{v}_{m}')^{-1}\big\rc\diamond \lbar{w}_{1}\Big) \\
&\ \lbar{w}_{2}\cdots \lbar{w}_{n-1}\big(\big\lc \lbar{u}\diamond (\lc \lbar{u} \rc\diamond \lbar{v}_{1})\lbar{v}_{2}\cdots \lbar{v}_{m-1}(\lc \lbar{u} \rc\diamond \lbar{v}_{m}')^{-1}\big\rc\diamond \lbar{w}_{n}'\big)^{-1}\bigg\rc   \hspace{1cm} (\text{Eqs.~(\ref{eq:conc}) and~(\ref{eq:stdia})}).
\end{align*}
On the other hand,
\begin{align*}
&\ u\diamond (v\diamond w) \\
=&\ \lc \lbar{u} \rc\diamond (\lc \lbar{v} \rc\diamond \lc \lbar{w} \rc)\\
=&\ \lc \lbar{u} \rc\diamond \Big\lc \lbar{v}\diamond \big((\lc \lbar{v} \rc\diamond \lbar{w}_{1})\lbar{w}_{2}\cdots \lbar{w}_{n-1}(\lc \lbar{v} \rc\diamond \lbar{w}_{n}')^{-1}\big)\Big\rc \hspace{0.3cm} (\text{Eqs.~(\ref{eq:dia}) and~(\ref{eq:diam})})\\
=&\ \bigg\lc \lbar{u}\diamond \Big((\lc \lbar{u} \rc\diamond \lbar{v}_{1})\lbar{v}_{2}\cdots \lbar{v}_{m}\diamond (\lc \lbar{v} \rc\diamond \lbar{w}_{1})\lbar{w}_{2}\cdots \lbar{w}_{n-1}\big(\lc \lbar{u} \rc\diamond (\lc \lbar{v} \rc\diamond \lbar{w}_{n}')\big)^{-1}\Big)\bigg\rc \\
=&\ \bigg\lc \lbar{u}\diamond \Bigg((\lc \lbar{u} \rc\diamond \lbar{v}_{1})\lbar{v}_{2}\cdots \lbar{v}_{m}\diamond \bigg( \big(\lc \lbar{u} \rc^{-1}\lc \lbar{u} \rc\diamond (\lc \lbar{v} \rc\diamond \lbar{w}_{1})\big)\lbar{w}_{2}\cdots \lbar{w}_{n-1}\big((\lc \lbar{u} \rc\diamond \lc \lbar{v} \rc)\diamond \lbar{w}_{n}'\big)^{-1}\bigg)\Bigg)\bigg\rc\\
&\ \hspace{6cm} (\text{inserting }1=\lc \lbar{u} \rc^{-1}\lc \lbar{u} \rc \text{ and induction on depth})\\
=&\ \bigg\lc \lbar{u}\diamond \Bigg((\lc \lbar{u} \rc\diamond \lbar{v}_{1})\lbar{v}_{2}\cdots \lbar{v}_{m}\diamond \bigg(\Big(\lc \lbar{u} \rc^{-1}\big(\lc \lbar{u} \rc\diamond (\lc \lbar{v} \rc\diamond \lbar{w}_{1})\big)\Big)\lbar{w}_{2}\cdots \lbar{w}_{n-1}\big((\lc \lbar{u} \rc\diamond \lc \lbar{v} \rc)\diamond \lbar{w}_{n}'\big)^{-1}\bigg)\Bigg)\bigg\rc\\
&\ \hspace{10cm} (\text{Eq.~(\ref{eq:stdia})}) \\
=&\ \bigg\lc \lbar{u}\diamond \Bigg((\lc \lbar{u} \rc\diamond \lbar{v}_{1})\lbar{v}_{2}\cdots \lbar{v}_{m}\diamond \bigg(\Big(\lc \lbar{u} \rc^{-1}\big((\lc \lbar{u} \rc\diamond \lc \lbar{v} \rc)\diamond \lbar{w}_{1}\big)\Big)\lbar{w}_{2}\cdots \lbar{w}_{n-1}\big((\lc \lbar{u} \rc\diamond \lc \lbar{v} \rc)\diamond \lbar{w}_{n}'\big)^{-1}\bigg)\Bigg)\bigg\rc \\
&\ \hspace{9cm} (\text{induction on depth})\\
=&\ \bigg\lc \lbar{u}\diamond \bigg((\lc \lbar{u} \rc\diamond \lbar{v}_{1})\lbar{v}_{2}\cdots \lbar{v}_{m-1}(\lbar{v}_{m}\diamond \lc \lbar{u} \rc^{-1})\big((\lc \lbar{u} \rc\diamond \lc \lbar{v} \rc)\diamond \lbar{w}_{1}\big)\lbar{w}_{2}\cdots \lbar{w}_{n-1}\big((\lc \lbar{u} \rc\diamond \lc \lbar{v} \rc)\diamond \lbar{w}_{n}'\big)^{-1}\bigg)\bigg\rc\\
&\ \hspace{10cm}(\text{Eq.~(\ref{eq:stdia})})\\
=&\ \bigg\lc\Big(\lbar{u}\diamond \big((\lc \lbar{u} \rc\diamond \lbar{v}_{1})\lbar{v}_{2}\cdots \lbar{v}_{m-1}(\lc \lbar{u} \rc \diamond \lbar{v}_{m}')^{-1}\big)\Big)  \Big(\big\lc \lbar{u} \diamond (\lc \lbar{u} \rc\diamond \lbar{v}_{1})\lbar{v}_{2}\cdots \lbar{v}_{m-1}(\lc \lbar{u} \rc\diamond \lbar{v}_{m}')^{-1} \big\rc\diamond \lbar{w}_{1}\Big) \\
&\ \lbar{w}_{2}\cdots \lbar{w}_{n-1}\big(\big\lc \lbar{u}\diamond (\lc \lbar{u} \rc\diamond \lbar{v}_1)\lbar{v}_{2}\cdots \lbar{v}_{m-1}(\lc \lbar{u} \rc\diamond \lbar{v}_{m}')^{-1}\big\rc\diamond \lbar{w}_{n}'\big)^{-1}\bigg\rc\\
&\ \hspace{5cm}\big(\lbar{v}_{m} = ( \lbar{v}_{m}')^{-1}\text{and Eqs.~(\ref{eq:dia}), (\ref{eq:diam}), (\ref{eq:uvconc}) and (\ref{eq:stdia})}\big).
\end{align*}
So Eq.~(\mref{eq:as}) holds.

\noindent{\bf Case 1.2.} Suppose $\lbar{v}_{m}=(\lbar{v}_{m}')^{-1}\in \lc \rbgw\rc^{-1}$ for some $\lbar{v}_{m}'\in \lc\rbgw\rc$ and $\lbar{w}_{n}\notin \lc \rbgw\rc^{-1}$. Then
\begin{align*}
&\ (u\diamond v) \diamond w \\
=&\ (\lc \lbar{u} \rc\diamond \lc \lbar{v} \rc)\diamond \lc \lbar{w} \rc\\
=&\ \Big\lc \lbar{u}\diamond \big((\lc \lbar{u} \rc\diamond \lbar{v}_{1})\lbar{v}_{2}\cdots \lbar{v}_{m-1}(\lc \lbar{u} \rc\diamond \lbar{v}_{m}')^{-1}\big)\Big\rc\diamond \lc \lbar{w} \rc \hspace{1cm} (\text{Eqs.~(\ref{eq:dia}) and~(\ref{eq:diam})})\\
=&\ \bigg\lc \Big(\lbar{u}\diamond \big((\lc \lbar{u} \rc\diamond \lbar{v}_{1})\lbar{v}_{2}\cdots \lbar{v}_{m-1}(\lc \lbar{u} \rc\diamond \lbar{v}_{m}')^{-1}\big)\Big)\diamond \Big(\Big\lc \lbar{u}\diamond \big((\lc \lbar{u} \rc\diamond \lbar{v}_{1})\lbar{v}_{2}\cdots \lbar{v}_{m-1}(\lc \lbar{u} \rc\diamond \lbar{v}_{m}')^{-1}\big)\Big\rc\diamond \lbar{w}\Big) \\
&\ \Big\lc \lbar{u}\diamond \big((\lc \lbar{u} \rc\diamond \lbar{v}_{1})\lbar{v}_{2}\cdots \lbar{v}_{m-1}(\lc \lbar{u} \rc\diamond \lbar{v}_{m}')^{-1}\big)\Big\rc^{-1}\bigg\rc \hspace{1cm} (\text{Eqs.~(\ref{eq:dia}) and~(\ref{eq:diam})}) \\
=&\ \bigg\lc \Big(\lbar{u}\diamond \big((\lc \lbar{u} \rc\diamond \lbar{v}_{1})\lbar{v}_{2}\cdots \lbar{v}_{m-1}(\lc \lbar{u} \rc\diamond \lbar{v}_{m}')^{-1}\big)\Big)  \Big(\Big\lc \lbar{u}\diamond \big((\lc \lbar{u} \rc\diamond \lbar{v}_{1})\lbar{v}_{2}\cdots \lbar{v}_{m-1}(\lc \lbar{u} \rc\diamond \lbar{v}_{m}')^{-1}\big)\Big\rc\diamond \lbar{w}\Big) \\
&\ \Big\lc \lbar{u}\diamond \big((\lc \lbar{u} \rc\diamond \lbar{v}_{1})\lbar{v}_{2}\cdots \lbar{v}_{m-1}(\lc \lbar{u} \rc\diamond \lbar{v}_{m}')^{-1}\big)\Big\rc^{-1}\bigg\rc   \hspace{3cm} (\text{Eqs.~(\ref{eq:conc}) and~(\ref{eq:stdia})})
\end{align*}
and
\begin{align*}
&\ u\diamond (v \diamond w ) \\
= &\ \lc u \rc\diamond (\lc v \rc\diamond \lc w \rc) \\
=&\ \lc u \rc\diamond \Big\lc \lbar{v} \diamond \Big( (\lc \lbar{v}\rc \diamond \lbar{w})\lc \lbar{v}\rc^{-1}  \Big)   \Big\rc \hspace{1cm} (\text{Eqs.~(\ref{eq:dia}) and~(\ref{eq:diam})})\\
=&\ \lc \lbar{u} \rc\diamond \Big\lc \lbar{v}_{1}\Big(\lbar{v}_{2}\cdots \lbar{v}_{m}\diamond (\lc \lbar{v} \rc\diamond \lbar{w})\Big)\lc \lbar{v} \rc^{-1}\Big\rc \hspace{2cm} (\text{Eq.~(\ref{eq:stdia})})\\
=&\ \bigg\lc \lbar{u}\diamond \bigg((\lc \lbar{u} \rc\diamond \lbar{v}_{1})\Big(\lbar{v}_{2}\cdots \lbar{v}_{m}\diamond (\lc \lbar{v} \rc\diamond \lbar{w})\Big)(\lc \lbar{u} \rc\diamond \lc \lbar{v} \rc)^{-1}\bigg)\bigg\rc \hspace{1cm} (\text{Eqs.~(\ref{eq:dia}) and (\ref{eq:diam})})\\
=&\ \bigg\lc \lbar{u}\diamond \bigg((\lc \lbar{u} \rc\diamond \lbar{v}_{1})\Big(\lbar{v}_{2}\cdots \lbar{v}_{m}\diamond \big((\lc \lbar{u}\rc^{-1} \lc \lbar{u}\rc)\diamond (\lc \lbar{v} \rc\diamond \lbar{w})\big) \Big)(\lc \lbar{u} \rc\diamond \lc \lbar{v} \rc)^{-1}\bigg)\bigg\rc  \quad\quad (1= \lc \lbar{u}\rc^{-1} \lc \lbar{u}\rc )\\
=&\ \bigg\lc \lbar{u}\diamond \Bigg((\lc \lbar{u} \rc\diamond \lbar{v}_{1})\bigg((\lbar{v}_{2}\cdots \lbar{v}_{m}\diamond \lc \lbar{u} \rc^{-1})\Big(\lc \lbar{u} \rc\diamond (\lc \lbar{v} \rc\diamond \lbar{w})\Big)\bigg)(\lc \lbar{u} \rc\diamond \lc \lbar{v} \rc)^{-1}\Bigg)\bigg\rc \quad (\text{Eq.~(\ref{eq:stdia})})\\
=&\ \bigg\lc \lbar{u}\diamond \Bigg((\lc \lbar{u} \rc\diamond \lbar{v}_{1})\bigg((\lbar{v}_{2}\cdots \lbar{v}_{m}\diamond \lc \lbar{u} \rc^{-1})\Big((\lc \lbar{u} \rc\diamond \lc \lbar{v} \rc)\diamond \lbar{w}\Big)\bigg)(\lc \lbar{u} \rc\diamond \lc \lbar{v} \rc)^{-1}\Bigg)\bigg\rc \hspace{0.5cm} (\text{induction on depth})\\
=&\ \bigg\lc \lbar{u}\diamond \Big((\lc \lbar{u} \rc\diamond \lbar{v}_{1})\lbar{v}_{2}\cdots \lbar{v}_{m-1}( \lc \lbar{u} \rc \diamond \lbar{v}'_{m})^{-1}\Big)\bigg(\Big\lc \lbar{u} \diamond \Big((\lc \lbar{u} \rc\diamond \lbar{v}_{1})\lbar{v}_{2}\cdots \lbar{v}_{m-1}(\lc \lbar{u} \rc\diamond \lbar{v}_{m}')^{-1}\Big)\Big\rc\diamond \lbar{w}\bigg)\\
&\ \Big\lc \lbar{u} \diamond \Big((\lc \lbar{u} \rc\diamond \lbar{v}_{1})\lbar{v}_{2}\cdots \lbar{v}_{m-1}( \lc \lbar{u} \rc\diamond \lbar{v}_{m}')^{-1}\Big)\Big\rc^{-1}\bigg\rc
\hspace{0.8cm} \quad (\lbar{v}_{m} = ( \lbar{v}_{m}')^{-1} \text{and Eqs.~(\ref{eq:dia}), (\ref{eq:diam}), (\ref{eq:stdia})}),
\end{align*}
showing that Eq.~(\mref{eq:as}) is valid in this case.

\noindent{\bf Case 1.3.}  Suppose $\lbar{v}_{m}\notin\lc\rbgw\rc^{-1}$ and $\lbar{w}_{n}=(\lbar{w}_{n}')^{-1} \in \lc\rbgw\rc^{-1}$ for some $\lbar{w}_{n}'\in \lc\rbgw\rc$. Then we check
\begin{align*}
&\ (u\diamond v)\diamond w \\
= &\ (\lc u \rc\diamond \lc v \rc)\diamond \lc w \rc\\
=&\ \Big\lc \lbar{u}\diamond \Big((\lc \lbar{u} \rc\diamond \lbar{v})\lc \lbar{u} \rc^{-1}\Big)\Big\rc\diamond \lc \lbar{w} \rc\hspace{4cm} (\text{Eqs.~(\ref{eq:dia}) and~(\ref{eq:diam})})\\
=&\ \bigg\lc \bigg(\lbar{u}\diamond \Big((\lc \lbar{u} \rc\diamond \lbar{v})\lc \lbar{u} \rc^{-1}\Big)\bigg)\diamond \bigg(\Big\lc \lbar{u}\diamond \Big((\lc \lbar{u} \rc\diamond \lbar{v})\lc \lbar{u} \rc^{-1}\Big)\Big\rc \diamond \lbar{w}_{1}\bigg) \\
&\ \lbar{w}_{2}\cdots \lbar{w}_{m-1} \bigg(\Big\lc \lbar{u}\diamond \Big((\lc \lbar{u} \rc\diamond \lbar{v})\lc \lbar{u} \rc^{-1}\Big)\Big\rc \diamond \lbar{w}_{n}'\bigg)^{-1}\bigg\rc\hspace{2cm} (\text{Eqs.~(\ref{eq:dia}) and~(\ref{eq:diam})})\\
=&\ \bigg\lc \bigg(\lbar{u}\diamond \Big((\lc \lbar{u} \rc\diamond \lbar{v})\lc \lbar{u} \rc^{-1}\Big)\bigg)  \bigg(\Big\lc \lbar{u}\diamond \Big((\lc \lbar{u} \rc\diamond \lbar{v})\lc \lbar{u} \rc^{-1}\Big)\Big\rc \diamond \lbar{w}_{1}\bigg) \\
&\ \lbar{w}_{2}\cdots \lbar{w}_{m-1} \bigg(\Big\lc \lbar{u}\diamond \Big((\lc \lbar{u} \rc\diamond \lbar{v})\lc \lbar{u} \rc^{-1}\Big)\Big\rc \diamond \lbar{w}_{n}'\bigg)^{-1}\bigg\rc  \hspace{1cm} (\text{Eqs.~(\ref{eq:conc}) and~(\ref{eq:stdia})}),
\end{align*}
and
\begin{align*}
&\ u\diamond(v\diamond w) \\
= &\ \lc u \rc\diamond (\lc v \rc\diamond \lc w \rc)\\
=&\ \lc \lbar{u} \rc \diamond \Big\lc \lbar{v}\diamond \Big((\lc \lbar{v} \rc\diamond \lbar{w}_{1})\lbar{w}_{2}\cdots \lbar{w}_{n-1}(\lc \lbar{v} \rc\diamond \lbar{w}_{n}')^{-1}\Big)\Big\rc \hspace{2cm} (\text{Eqs.~(\ref{eq:dia}) and (\ref{eq:diam})})\\
=&\ \bigg\lc \lbar{u}\diamond \bigg(\lc \lbar{u} \rc\diamond \Big(\lbar{v}\diamond (\lc \lbar{v} \rc\diamond \lbar{w}_{1})\lbar{w}_{2}\cdots \lbar{w}_{n-1}\Big)\Big( \lc \lbar{u} \rc\diamond (\lc \lbar{v} \rc\diamond \lbar{w}_{n}')\Big)^{-1}\bigg)\bigg\rc\quad (\text{Eqs.~(\ref{eq:dia}), (\ref{eq:diam}) and~(\ref{eq:stdia})})\\
=&\ \bigg\lc \lbar{u}\diamond \bigg(\lc \lbar{u} \rc\diamond \Big((\lbar{v}\lc \lbar{u} \rc^{-1}\lc \lbar{u} \rc)\diamond (\lc \lbar{v} \rc\diamond \lbar{w}_{1})\lbar{w}_{2}\cdots \lbar{w}_{n-1}\Big)\Big( \lc \lbar{u} \rc\diamond (\lc \lbar{v} \rc\diamond \lbar{w}_{n}')\Big)^{-1}\bigg)\bigg\rc \hspace{0.1cm} (\text{inserting }1=\lc \lbar{u} \rc^{-1}\lc \lbar{u} \rc)\\
=&\ \bigg\lc \lbar{u}\diamond \Bigg(\lc \lbar{u} \rc\diamond \bigg((\lbar{v}\lc \lbar{u} \rc^{-1})\Big(\lc \lbar{u} \rc\diamond (\lc \lbar{v} \rc\diamond \lbar{w}_{1})\Big)\lbar{w}_{2}\cdots \lbar{w}_{n-1}\bigg)\Big( \lc \lbar{u} \rc\diamond (\lc \lbar{v} \rc\diamond \lbar{w}_{n}')\Big)^{-1}\Bigg)\bigg\rc \quad (\text{Eq.~(\ref{eq:stdia})})\\
=&\ \bigg\lc \lbar{u}\diamond \Bigg(\lc \lbar{u} \rc\diamond \bigg((\lbar{v}\lc \lbar{u} \rc^{-1})\Big((\lc \lbar{u} \rc\diamond \lc \lbar{v} \rc)\diamond \lbar{w}_{1}\Big)\lbar{w}_{2}\cdots \lbar{w}_{n-1}\bigg)\Big( (\lc \lbar{u} \rc\diamond \lc \lbar{v} \rc)\diamond \lbar{w}_{n}'\Big)^{-1}\Bigg)\bigg\rc \hspace{0.5cm} (\text{induction on depth})\\
=&\ \bigg\lc \bigg(\lbar{u}\diamond \Big((\lc \lbar{u} \rc\diamond \lbar{v})\lc \lbar{u} \rc^{-1}\Big)\bigg)  \bigg(\Big\lc \lbar{u}\diamond \Big((\lc \lbar{u} \rc\diamond \lbar{v})\lc \lbar{u} \rc^{-1}\Big)\Big\rc \diamond \lbar{w}_{1}\bigg) \\
&\ \lbar{w}_{2}\cdots \lbar{w}_{m-1} \bigg(\Big\lc \lbar{u}\diamond \Big((\lc \lbar{u} \rc\diamond \lbar{v})\lc \lbar{u} \rc^{-1}\Big)\Big\rc \diamond \lbar{w}_{n}'\bigg)^{-1}\bigg\rc \hspace{3cm} (\text{Eqs.~(\ref{eq:dia}), (\ref{eq:diam}) and~(\ref{eq:stdia})}).
\end{align*}
Hence Eq.~(\mref{eq:as}) holds.

\noindent{\bf Case 1.4.}
Suppose $\lbar{v}_{m}, \lbar{w}_{n}\notin \lc \rbgw\rc^{-1}$. Then we have 
\begin{align*}
&\ (u\diamond v)\diamond w \\
=&\ (\lc u \rc\diamond \lc v \rc)\diamond \lc w \rc\\
=&\ \Big\lc \lbar{u}\diamond \big( (\lc \lbar{u} \rc\diamond \lbar{v})\lc \lbar{u} \rc^{-1} \big)\Big \rc \diamond \lc \lbar{w} \rc\hspace{4cm} (\text{Eqs.~(\ref{eq:dia}) and (\ref{eq:diam})})\\
=&\ \bigg\lc \Big( \lbar{u}\diamond ( (\lc \lbar{u} \rc\diamond \lbar{v})\lc \lbar{u} \rc^{-1})\Big)\diamond \bigg(\Big(\Big\lc \lbar{u}\diamond (\lc \lbar{u} \rc\diamond \lbar{v})\lc \lbar{u} \rc^{-1}\Big\rc\diamond \lbar{w}\Big)\Big\lc \lbar{u}\diamond (\lc \lbar{u} \rc\diamond \lbar{v})\lc \lbar{u} \rc^{-1}\Big\rc^{-1}\bigg)\bigg\rc\\
&\ \hspace{9cm} (\text{Eqs.~(\ref{eq:dia}) and (\ref{eq:diam})})\\
=&\ \bigg\lc \Big( \lbar{u}\diamond ( (\lc \lbar{u} \rc\diamond \lbar{v})\lc \lbar{u} \rc^{-1})\Big) \bigg(\Big(\Big\lc \lbar{u}\diamond (\lc \lbar{u} \rc\diamond \lbar{v})\lc \lbar{u} \rc^{-1}\Big\rc\diamond \lbar{w}\Big)\Big\lc \lbar{u}\diamond (\lc \lbar{u} \rc\diamond \lbar{v})\lc \lbar{u} \rc^{-1}\Big\rc^{-1}\bigg)\bigg\rc\\
&\ \hspace{9cm} (\text{Eqs.~(\ref{eq:conc}) and~(\ref{eq:stdia})}),
\end{align*}
and also
\begin{align*}
&\ u\diamond(v\diamond w) \\
= &\ \lc u \rc\diamond (\lc v \rc\diamond \lc w \rc)\\
=&\ \lc \lbar{u} \rc\diamond \Big\lc \lbar{v}\diamond (\lc \lbar{v} \rc\diamond \lbar{w})\lc \lbar{v} \rc^{-1}\Big\rc\hspace{5.3cm} (\text{Eqs.~(\ref{eq:dia}) and (\ref{eq:diam})})\\
=&\ \bigg\lc \lbar{u}\diamond \bigg(\lc \lbar{u} \rc\diamond \Big(\lbar{v}\diamond (\lc \lbar{v} \rc\diamond \lbar{w})\Big)(\lc\lbar{u}\rc \diamond \lc \lbar{v}\rc)^{-1}\bigg) \bigg\rc \hspace{3cm} (\text{Eqs.(\ref{eq:dia}) and (\ref{eq:diam})})\\
=&\ \bigg\lc \lbar{u}\diamond \bigg(\lc \lbar{u} \rc\diamond \Big((\lbar{v}\lc \lbar{u} \rc^{-1}\lc \lbar{u} \rc)\diamond (\lc \lbar{v} \rc\diamond \lbar{w})\Big) (\lc\lbar{u}\rc \diamond \lc \lbar{v}\rc)^{-1}\bigg) \bigg\rc \quad (\text{inserting }1=\lc \lbar{u} \rc^{-1}\lc \lbar{u} \rc)\\
=&\ \bigg\lc \lbar{u}\diamond \Bigg(\lc \lbar{u} \rc\diamond \bigg(\lbar{v}\lc \lbar{u} \rc^{-1}\Big(\lc \lbar{u} \rc\diamond (\lc \lbar{v} \rc\diamond \lbar{w})\Big)\bigg) (\lc\lbar{u}\rc \diamond \lc \lbar{v}\rc)^{-1} \Bigg)\bigg\rc \hspace{2cm} (\text{Eq.~(\ref{eq:stdia})})\\
=&\ \bigg\lc \lbar{u}\diamond \Bigg(\lc \lbar{u} \rc\diamond \bigg(\lbar{v}\lc \lbar{u} \rc^{-1}\Big((\lc \lbar{u} \rc\diamond \lc \lbar{v} \rc)\diamond \lbar{w}\Big)\bigg)  (\lc\lbar{u}\rc \diamond \lc \lbar{v}\rc)^{-1}  \Bigg) \bigg\rc   \hspace{1cm} (\text{induction on depth})\\
=&\ \bigg\lc \lbar{u}\diamond \Bigg(\lc \lbar{u} \rc\diamond \bigg(\lbar{v}\lc \lbar{u} \rc^{-1}\Big(\Big\lc \lbar{u}\diamond (\lc \lbar{u}\rc\diamond \lbar{v})\lc \lbar{u} \rc^{-1}\Big\rc\diamond \lbar{w}\Big)\bigg) (\lc\lbar{u}\rc \diamond \lc \lbar{v}\rc)^{-1} \Bigg)\bigg\rc  \quad (\text{Eqs.~(\ref{eq:dia}) and (\ref{eq:diam})})\\
=&\ \bigg\lc \lbar{u}\diamond \Bigg((\lc \lbar{u} \rc\diamond \lbar{v}\lc \lbar{u} \rc^{-1})\Big(\Big\lc \lbar{u}\diamond (\lc \lbar{u}\rc\diamond \lbar{v})\lc \lbar{u} \rc^{-1}\Big\rc\diamond \lbar{w}\Big)  (\lc\lbar{u}\rc \diamond \lc \lbar{v}\rc)^{-1} \Bigg)\bigg\rc
\hspace{2cm} (\text{Eq.~(\ref{eq:stdia})})\\
=&\ \bigg\lc \Big(\lbar{u}\diamond (\lc \lbar{u} \rc\diamond \lbar{v}\lc \lbar{u} \rc^{-1})\Big)\Big(\Big\lc \lbar{u}\diamond (\lc \lbar{u} \rc\diamond \lbar{v})\lc \lbar{u} \rc^{-1}\Big\rc\diamond \lbar{w}\Big)(\lc \lbar{u} \rc\diamond \lc \lbar{v} \rc)^{-1}\bigg\rc\hspace{2cm} (\text{Eq.~(\ref{eq:stdia})})\\
=&\ \bigg\lc \Big(\lbar{u}\diamond ( (\lc \lbar{u} \rc\diamond \lbar{v}) \lc \lbar{u} \rc^{-1})\Big) \bigg(\Big(\Big\lc \lbar{u}\diamond (\lc \lbar{u} \rc\diamond \lbar{v})\lc \lbar{u} \rc^{-1}\Big\rc\diamond \lbar{w}\Big)\Big\lc \lbar{u}\diamond (\lc \lbar{u} \rc\diamond \lbar{v})\lc \lbar{u} \rc^{-1}\Big\rc^{-1}\bigg)\bigg\rc\\
& \hspace{8cm} (\text{Eqs.~(\ref{eq:dia}), (\ref{eq:diam}) and~(\ref{eq:stdia})}).
\end{align*}
Therefore Eq.~(\mref{eq:as}) is valid.

\noindent{\bf Case 2.} Suppose $u\notin\lc \rbgw\rc$.
Then $u\in X$ or $u\in \lc \rbgw\rc^{-1}$.
If $u\in X$, then
\begin{equation}
(u\diamond v)\diamond w=\ (uv)\diamond w=\ u(v\diamond w)=\ u\diamond (v\diamond w).
\mlabel{eq:asso1}
\end{equation}
Suppose $u\notin X$, i.e., $u\in \lc \rbgw\rc^{-1}$.
If $v\notin\lc \rbgw\rc^{-1}$, then Eq.~(\ref{eq:asso1}) holds again.
Assume $v\in \lc \rbgw\rc^{-1}$. If $w\notin \lc \rbgw\rc^{-1}$, then
\begin{equation}
u\diamond (v\diamond w)=u\diamond (vw)=(u\diamond v)w=(u\diamond v)\diamond w.
\mlabel{eq:asso2}
\end{equation}
If $w\in \lc \rbgw\rc^{-1}$, let $u=(u')^{-1}, v=(v')^{-1}$ and $w=(w')^{-1}$ for some $u', v', w'\in\lc \rbgw\rc$. On the one hand,
\begin{align*}
(u\diamond v)=\Big((u')^{-1}\diamond (v')^{-1}\Big)\diamond (w')^{-1}=(v'\diamond u')^{-1}\diamond (w')^{-1}=\Big(w'\diamond (v'\diamond u')\Big)^{-1}.
\end{align*}
On the other hand,
\begin{align*}
u\diamond (v\diamond w)=(u')^{-1}\diamond \Big((v')^{-1}\diamond (w')^{-1}\Big)=(u')^{-1}\diamond (w'\diamond v')^{-1}=\Big((w'\diamond v')\diamond u'\Big)^{-1}.
\end{align*}
Then the proof can be reduced to Case 1.

\noindent{\bf Case 3.}  Suppose $w\notin\lc \rbgw\rc$. The proof is similar to Case 2.

\noindent{\bf Case 4.} Suppose $v\notin\lc \rbgw\rc$. Then $v\in X$ or $v\in\lc \rbgw\rc^{-1}$.
This case is similar to Case 2 by symmetry.

This completes the initial step of the induction on  $\bre(u)+\bre(v)+\bre(w)$.

\smallskip

\noindent
{\bf II. The inductive step on $\bre(u)+\bre(v)+\bre(w)$.}
For a given $k\geq 3$, suppose that Eq.~\meqref{eq:as} has been verified for all $u, v, w\in \rbgw$ with $\bre(u)+\bre(v)+\bre(w)\leq k$ and consider the case when $\bre(u)+\bre(v)+\bre(w)=k+1$.
Then $k+1\geq 4$ and so at least one of $\bre(u)$, $\bre(v)$ or $\bre(w)$ is greater than one. We accordingly have the following three cases to verify.

\noindent{\bf Case 1.} Suppose $\bre(u)\geq 1$. Then we write
$u=u_{1}\cdots u_{m}$ in the standard factorization with $m\geq 2$. Then by Eq.~\meqref{eq:stdia} and the induction hypothesis we have
\begin{align*}
(u\diamond v)\diamond w=&\ \big((u_{1}\cdots u_{m}) \diamond v\big) \diamond w\\
=&\ \big((u_{1}\cdots u_{m-1})(u_{m}\diamond v)\big)\diamond w   \\
=&\ (u_{1}\cdots u_{m-1})\big((u_{m}\diamond v)\diamond w\big) \\
=&\ (u_{1}\cdots u_{m-1})\big(u_{m}\diamond (v\diamond w)\big) \\
=&\ (u_{1}\cdots u_{m-1}u_{m})\diamond (v \diamond w) \\
=&\ u\diamond (v\diamond w).
\end{align*}

\noindent{\bf Case 2.}  Suppose $\bre(w)\geq 2$. Then the verification is similar to the previous case.

\noindent{\bf Case 3.} Suppose $\bre(v)\geq 2$. Write $v=v_{1}\cdots v_{n}$ in the standard factorization with $n\geq 2$. Then repeatedly applying Eq.~\meqref{eq:stdia}, we obtain
\begin{align*}
(u\diamond v)\diamond w=&\ \big(u\diamond (v_{1}\cdots v_{n})\big)\diamond w\\
=&\ \big( (u\diamond v_{1}) v_{2}\cdots v_{n}\big)\diamond w\\
=&\ (u\diamond v_{1})(v_{2}\cdots v_{n-1})(v_{n}\diamond w)\\
=&\ u\diamond \big( (v_{1}\cdots v_{n-1})(v_{n}\diamond w) \big) \\
=&\ u\diamond \big((v_{1}\cdots v_{n})\diamond w\big)\\
=&\ u\diamond (v\diamond w).
\end{align*}

Now we have completed the inductive proof of the associativity in Eq.~\meqref{eq:as}.

\smallskip 

Finally, we verify Eq.~(\mref{RBgroup}).
Let $u=\lc \lbar{u} \rc$ and $v=\lc \lbar{v} \rc$ be in $\lc \rbgw \rc$. Write $\lbar{v}=\lbar{v}_{1}\cdots \lbar{v}_{k}$ in the standard form. If $\lbar{v}_{k}=(\lbar{v}_{k}')^{-1}\in \lc \rbgw \rc^{-1}$, we have
\begin{align*}
\AD_{u}\lbar{v}=&\ (u\diamond \lbar{v}_{1})\lbar{v}_{2}\cdots \lbar{v}_{k-1}(u \diamond \lbar{v}_{k}')^{-1} \hspace{3cm} (\text{Eq.~(\mref{eq:diam})})\\
=&\ (u\diamond \lbar{v}_{1})\lbar{v}_{2}\cdots \lbar{v}_{k-1}(\lbar{v}_{k} \diamond u^{-1})\\
=&\ \Big((u\diamond \lbar{v}_{1})\lbar{v}_{2}\cdots \lbar{v}_{k-1}\lbar{v}_{k}\Big) \diamond u^{-1} \hspace{3cm} (\text{Eq.~(\mref{eq:stdia})})\\
=&\ u \diamond v \diamond u^{-1} \hspace{5.7cm} (\text{Eq.~(\mref{eq:stdia})}).
\end{align*}
If $\lbar{v}_{k}\notin \lc \rbgw \rc^{-1}$, we obtain
\begin{align*}
\AD_{u}\lbar{v}=&\ (u\diamond \lbar{v})u^{-1}\hspace{5.3cm} (\text{Eq.~(\mref{eq:diam})})\\
=&\ \Big( ( u \diamond \lbar{v}_{1})\lbar{v}_{2}\cdots \lbar{v_{k}} \Big)u^{-1}\hspace{3.6cm} (\text{Eq.~(\mref{eq:stdia})})\\
=&\ (u\diamond \lbar{v}_{1})\lbar{v}_{2}\cdots \lbar{v}_{k-1}(\lbar{v}_{k}\diamond u^{-1})\hspace{2.6cm} (\text{Eq.~(\mref{eq:conc})})\\
=&\ \Big( (u \diamond \lbar{v}_{1})\lbar{v}_{2}\cdots \lbar{v}_{k} \Big)\diamond u^{-1} \hspace{3.2cm} (\text{Eq.~(\mref{eq:stdia})})\\
=&\ u \diamond v\diamond u^{-1}\hspace{5.2cm} (\text{Eq.~(\mref{eq:stdia})}).
\end{align*}
Hence
\begin{align*}
u \diamond v = \lc \lbar{u} \diamond \AD_{u}\lbar{v}\rc= \lc \lbar{u} \diamond u \diamond v \diamond u^{-1} \rc,
\end{align*}
and so Eq.~(\mref{RBgroup}) holds. This completes the proof.
\end{proof}

\begin{thm}
Let $X$ be a set. The triple $(\rbgw, \rbgo, \diamond)$, together with the inclusion $i_{X}:X\rightarrow \rbgw$, is a free Rota-Baxter group of weight 1 on $X$.
\mlabel{thm:rtgp2}
\end{thm}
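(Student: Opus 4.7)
The plan is to verify the universal property directly, leveraging Theorem~\mref{thm:freeopgp}. Let $(G,\B)$ be a Rota-Baxter group and let $f:X\to G$ be a set map. Regarding $(G,\B)$ merely as an operated group, Theorem~\mref{thm:freeopgp} yields a unique operated group homomorphism $\widehat{f}:(\calg(X),P_X)\to (G,\B)$ extending $f$. Since $\rbgw\subseteq \calg(X)$ as a subset and the inclusion $i_X$ coincides with the restriction of $j_X$, I set $\free{f}:=\widehat{f}|_{\rbgw}$ and claim this is the desired Rota-Baxter group homomorphism.

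The relation $\free{f}(\lc w\rc)=\B(\free{f}(w))$ is automatic because $\widehat{f}$ is already an operated group homomorphism and the bracket $\rbgo$ agrees with $P_X$ on elements of $\rbgw$. The essential task is to establish the multiplicativity
$$\free{f}(u\diamond v)=\free{f}(u)\,\free{f}(v) \quad\text{in } G \tforall u,v\in \rbgw.$$
I would prove this by mirroring the inductive definition of $\diamond$: first an induction on $\bre(u)+\bre(v)$ to reduce to $\bre(u)=\bre(v)=1$, where Case~4 (Eq.~\meqref{eq:stdia}) together with the group homomorphism property of $\widehat{f}$ on $\calg(X)$ does the bookkeeping; and then, for breadth one, an induction on $(\dep(u),\dep(v))$ lexicographically, following Cases~1--3. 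Case~3 (Eq.~\meqref{eq:conc}) is immediate from $\widehat{f}$ being a group homomorphism, and Case~2 (Eq.~\meqref{eq:uvconc}) reduces to Case~1 via $\widehat{f}$ preserving inverses and the identity $(ab)^{-1}=b^{-1}a^{-1}$ in $G$.

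The crux is Case~1, where $u=\lc\lbar{u}\rc$, $v=\lc\lbar{v}\rc$ and $u\diamond v=\lc \lbar{u}\diamond \AD_u\lbar{v}\rc$ by Eq.~\meqref{eq:dia}. Using the Rota-Baxter relation~\meqref{RBgroup} in $G$,
$$\free{f}(u)\,\free{f}(v)=\B(\widehat{f}(\lbar{u}))\,\B(\widehat{f}(\lbar{v}))=\B\!\left(\widehat{f}(\lbar{u})\cdot \Ad_{\B(\widehat{f}(\lbar{u}))}\widehat{f}(\lbar{v})\right),$$
while on the other hand $\free{f}(u\diamond v)=\B\!\left(\widehat{f}(\lbar{u}\diamond \AD_u\lbar{v})\right)$. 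Applying the inductive multiplicativity to rewrite $\widehat{f}(\lbar{u}\diamond \AD_u\lbar{v})=\widehat{f}(\lbar{u})\cdot \widehat{f}(\AD_u\lbar{v})$, the matter reduces to the auxiliary identity
$$\widehat{f}(\AD_u\lbar{v})=\Ad_{\B(\widehat{f}(\lbar{u}))}\widehat{f}(\lbar{v}).$$
This will be checked directly from the two subcases of the definition~\meqref{eq:diam} by expanding $\lbar{v}=\lbar{v}_1\cdots \lbar{v}_k$ in its standard form and using that conjugation distributes over products in $G$. The main obstacle lies precisely here: one must show that the formal operator $\AD_u$ on bracketed Rota-Baxter group words corresponds under $\widehat{f}$ to genuine conjugation $\Ad_{\B(\widehat{f}(\lbar{u}))}$ in $G$, and this requires the two subcases (whether the last factor $\lbar{v}_k$ lies in $\lc\rbgw\rc^{-1}$ or not) to be treated separately, with the first subcase exploiting the fact that $\Ad_g(h^{-1})=(\Ad_g h)^{-1}$.

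For uniqueness, suppose $\phi:(\rbgw,\diamond,\rbgo)\to (G,\B)$ is any Rota-Baxter group homomorphism with $\phi\circ i_X=f$. Then $\phi$ is forced to equal $f$ on $X$ and to satisfy $\phi(\lc w\rc)=\B(\phi(w))$ for every $w\in \rbgw$. Combined with the multiplicativity of $\phi$ with respect to $\diamond$, an induction on $\dep$ and $\bre$ shows that $\phi$ is determined on all of $\rbgw$, since every element of $\rbgw$ is built up from $X$ via $\rbgo$, $\diamond$ and formal inverses. Thus $\phi=\free{f}$, completing the verification of the universal property.
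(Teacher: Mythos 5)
Your proposal is correct in substance and, at its core, runs the same verification as the paper: a lexicographic induction establishing that the candidate map is multiplicative for $\diamond$, with four cases mirroring the definition of $\diamond$ and the Rota--Baxter relation~\meqref{RBgroup} in $G$ carrying Case~1. Where you genuinely diverge is in how the map is produced: the paper constructs $\lbar{f}$ on $\rbgw$ from scratch by a recursion on depth (its Eqs.~\meqref{eq:bfdep0}--\meqref{eq:rbdef3}), whereas you obtain it for free by restricting the operated group homomorphism $\widehat{f}:\calg(X)\to G$ supplied by Theorem~\mref{thm:freeopgp}. This is a real economy: since $\widehat{f}$ is a group homomorphism on the free group $\calg(X)$ and intertwines $P_X$ with $\B$, the bracket-compatibility and all the ``distribute over the standard factorization'' steps (the paper's repeated appeals to Eqs.~\meqref{eq:rbdef1}--\meqref{eq:rbdef3}) become automatic, and one checks easily that $\widehat{f}|_{\rbgw}$ coincides with the paper's $\lbar{f}$. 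Your isolation of the crux as the auxiliary identity $\widehat{f}(\AD_u\lbar{v})=\Ad_{\B(\widehat{f}(\lbar{u}))}\widehat{f}(\lbar{v})$ is also a clean abstraction of what the paper computes inline in its Subcases~1.1 and~1.2, and your two-subcase verification of it (expanding $\lbar{v}=\lbar{v}_1\cdots\lbar{v}_k$, using $\widehat{f}(\lbar{v}'_k)^{-1}=\widehat{f}(\lbar{v}_k)$ in the first subcase) is exactly right.

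One organizational point needs fixing: you describe the induction as \emph{first} reducing on $\bre(u)+\bre(v)$ to breadth one, and \emph{then} inducting on $(\dep(u),\dep(v))$ for breadth-one elements. Read literally, this nesting is circular: in Case~1 the word $\lbar{u}\diamond\AD_u\lbar{v}$, to which you apply ``inductive multiplicativity,'' generally has breadth much larger than $1$, so multiplicativity for it is not available if breadth is the outer induction parameter. What saves the argument is that its depth pair is lexicographically smaller (indeed $\dep(\lbar{u})=\dep(u)-1$), so the induction must be organized as in the paper: a single lexicographic induction on $(\dep(u),\dep(v))$, with the breadth reduction of your Case~4 (via Eq.~\meqref{eq:stdia}) carried out \emph{inside} each depth stage, after the breadth-one Cases~1--3 are settled at that stage. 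With the nesting swapped in this way, your argument goes through, and your uniqueness argument agrees with the paper's (every element of $\rbgw$ in standard form is the $\diamond$-product of its letters by Case~3, so the values of any extension are forced).
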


\begin{proof}
Let $(G,P)$ be a Rota-Baxter group and $f:X\to G$ a set map. It suffices to show that there is a unique Rota-Baxter group homomorphism $$\lbar{f}: \rbgw\to G, w\mapsto \lbar{f}(w)$$ such that $f=\lbar{f} i_{X}$. 

({\bf Existence}) We use induction on $\dep(w)\geq 0$ to define $\lbar{f}(w)$.

For the initial step of $\dep(w) = 0$, if $\bre(w)\leq 1$, then $w=1$ or $w\in X\sqcup X^{-1}$. Then define
\begin{align}
\lbar{f}(w):=
\begin{cases}
1_G, & \text{ if } w = 1,\\
f(w),& \text{ if } w \in X,\\
f(x)^{-1}, &\text{ if } w=x^{-1} \in X^{-1} \text{ for some } x\in X.
\end{cases}
\mlabel{eq:bfdep0}
\end{align}
If $\bre(w) \geq 2$, write $w=w_{1}\cdots w_{n}$ in the standard form with $n\geq 2$ and define
\begin{equation}
\lbar{f}(w) :=\lbar{f}(w_{1}\cdots w_{n}) :=\lbar{f}(w_{1})\cdots \lbar{f}(w_{n}),
\mlabel{eq:rbdef3'}
\end{equation}
where each $\lbar{f}(w_i)$ is defined in Eq.~(\mref{eq:bfdep0}).

For a given $k\geq 0$, assume that $\lbar{f}(w)$ have been defined for $w\in \rbgw$ with $\dep(w)\leq k$ and consider $w\in \rbgw$ with $\dep(w)=k+1$.

First consider $w$ with $\bre(w)= 1$. From $\dep(w)=k+1\geq 1$, we have $w\in \lc \rbgw\rc$ or $w\in \lc \rbgw\rc^{-1}$. In the first case, write $w=\lc \lbar{w}\rc$ for some $\lbar{w}\in \rbgw$ with $\dep(\lbar{w})=k$. So the induction hypothesis allows us to define
\begin{equation}
\lbar{f}(w):=\lbar{f}(\lc \lbar{w}\rc):= P\big(\lbar{f}(\lbar{w})\big).
\mlabel{eq:rbdef1}
\end{equation}
In the second case, write $w =\lc \lbar{w}\rc^{-1}$ for some $\lbar{w}\in \rbgw$ with $\lbar{w}=k$. Then the inductive hypothesis allows us to define
\begin{equation}
\lbar{f}(w) :=\lbar{f}(\lc \lbar{w}\rc^{-1}):=P\big(\lbar{f}(\lbar{w})\big)^{-1}.
\mlabel{eq:rbdef2}
\end{equation}

Next consider $w$ with $\bre(w)\geq 2$. Write $w=w_{1}\cdots w_{n}$ in the standard form with $n\geq 2$ and define
\begin{equation}
\lbar{f}(w) :=\lbar{f}(w_{1}\cdots w_{n}) :=\lbar{f}(w_{1})\cdots \lbar{f}(w_{n}),
\mlabel{eq:rbdef3}
\end{equation}
where each $\lbar{f}(w_i)$ is defined in Eqs.~(\mref{eq:bfdep0}), (\mref{eq:rbdef1}) or~(\mref{eq:rbdef2}).

By Eq.~(\ref{eq:rbdef1}), we have $\lbar{f}  \circ \lc\,\rc = P\circ \lbar{f}$. So we are left to prove that $\lbar{f}$ is a group homomorphism:
\begin{equation}
\lbar{f}(u\diamond v)=\lbar{f}(u)\lbar{f}(v)\tforall~ u,v\in \rbgw,
\mlabel{eq:rbhomo}
\end{equation}
which will be achieved by an induction on $(\dep(u), \dep(v))\geq (0,0)$ lexicographically.

If $(\dep(u), \dep(v))=(0, 0)$, then $u, v\in \calg_{0}$ and Eq.~\eqref{eq:rbhomo} follows from Eq.~\eqref{eq:rbdef3'}.
Next fix $m,n\geq 0$ with $(m,n)>(0,0)$ lexicographically.
Assume that Eq.~\eqref{eq:rbhomo} is valid for $u,v\in \rbgw$ with $(\dep(u), \dep(v))< (m, n)$
lexicographically, and consider $u,v\in \rbgw$ with $(\dep(u), \dep(v)) = (m, n)$.
According to the four cases in the definition of $\diamond$, we have the following four cases to verify.

\noindent{\bf Case 1.} Suppose that $u=\lc\lbar{u}\rc$ and $v=\lc\lbar{v}\rc$ for some $\lbar{u}, \lbar{v}\in \rbgw$.
Write $\lbar{v}=\lbar{v}_{1}\cdots\lbar{v}_{k}$ in the standard form. According to $\lbar{v}_k$ in $\lc \rbgw\rc^{-1}$ or not, we consider two subcases.

\noindent{\bf Subcase 1.1.} $\lbar{v}_k = (\lbar{v}'_k)^{-1} \in \lc \rbgw\rc^{-1}$ with $\lbar{v}'_k\in \lc \rbgw\rc$. Then
\begin{align*}
&\ \lbar{f}(u\diamond v)\\
=&\ \lbar{f}(\lc \lbar{u} \rc \diamond \lc \lbar{v}\rc)\\
=&\ \lbar{f} \bigg( \bigg\lc \lbar{u} \diamond \Big((u \diamond \lbar{v}_1) \lbar{v}_2 \cdots \lbar{v}_{k-1} (u\diamond \lbar{v}'_k)^{-1}\Big) \bigg\rc \bigg) \hspace{4cm} (\text{Eqs.~(\ref{eq:dia}) and~(\ref{eq:diam})})\\
=&\ P\bigg( \lbar{f} \bigg(  \lbar{u} \diamond \Big((u \diamond \lbar{v}_1) \lbar{v}_2 \cdots \lbar{v}_{k-1} (u\diamond \lbar{v}'_k)^{-1}\Big) \bigg) \bigg)\hspace{4cm} (\text{Eq.~(\ref{eq:rbdef1})})\\
=&\ P\bigg( \lbar{f}(\lbar{u}) \, \lbar{f}\Big((u \diamond \lbar{v}_1) \lbar{v}_2 \cdots \lbar{v}_{k-1} (u\diamond \lbar{v}'_k)^{-1}\Big)\bigg) \hspace{4cm} (\text{induction on depth})\\
=&\ P\bigg( \lbar{f}(\lbar{u}) \, \lbar{f}(u \diamond \lbar{v}_1)\, \lbar{f}(\lbar{v}_2 \cdots \lbar{v}_{k-1})\, \lbar{f}( (u\diamond \lbar{v}'_k)^{-1}) \bigg) \hspace{4cm} (\text{Eq.~(\ref{eq:rbdef3})})\\
=&\ P\bigg( \lbar{f}(\lbar{u}) \, \lbar{f}(u) \, \lbar{f}(\lbar{v}_{1}) \, \lbar{f}(\lbar{v}_2 \cdots \lbar{v}_{k-1})
\lbar{f}( u\diamond \lbar{v}'_k)^{-1} \bigg) \hspace{1cm} (\text{induction on depth and Eq.~(\ref{eq:rbdef2})} )\\
=&\ P\bigg( \lbar{f}(\lbar{u}) \, \lbar{f}(u) \, \lbar{f}(\lbar{v}_{1}) \, \lbar{f}(\lbar{v}_2 \cdots \lbar{v}_{k-1})
\Big(\lbar{f}( u)  \lbar{f}(\lbar{v}'_k)\Big)^{-1} \bigg) \hspace{2cm} (\text{induction on depth} )\\
=&\ P\bigg( \lbar{f}(\lbar{u}) \, P(\lbar{f}(\lbar{u})) \, \lbar{f}(\lbar{v}_{1}) \, \lbar{f}(\lbar{v}_2 \cdots \lbar{v}_{k-1})
\Big( P(\lbar{f}(\lbar{u})) \, \lbar{f}(\lbar{v}'_k)\Big)^{-1} \bigg) \hspace{1cm} (u =\lc \lbar{u}\rc \text{ and Eq.~(\ref{eq:rbdef1})} )\\
=&\ P\bigg( \lbar{f}(\lbar{u}) \, P(\lbar{f}(\lbar{u})) \, \lbar{f}(\lbar{v}_{1}) \, \lbar{f}(\lbar{v}_2 \cdots \lbar{v}_{k-1})
\,\lbar{f}(\lbar{v}'_k)^{-1} \, P(\lbar{f}(\lbar{u}))^{-1} \bigg) \\
=&\ P\bigg( \lbar{f}(\lbar{u}) \, P(\lbar{f}(\lbar{u})) \, \lbar{f}(\lbar{v}_{1}) \, \lbar{f}(\lbar{v}_2 \cdots \lbar{v}_{k-1})
\,\lbar{f}((\lbar{v}'_k)^{-1}) \, P(\lbar{f}(\lbar{u}))^{-1} \bigg) \hspace{2cm} (\text{Eq.~(\ref{eq:rbdef2})} )\\
=&\ P\bigg( \lbar{f}(\lbar{u}) \, P(\lbar{f}(\lbar{u})) \, \lbar{f}(\lbar{v}_{1}) \, \lbar{f}(\lbar{v}_2 \cdots \lbar{v}_{k-1})
\,\lbar{f}(\lbar{v}_k) \, P(\lbar{f}(\lbar{u}))^{-1} \bigg) \hspace{3cm} (\lbar{v}_k = (\lbar{v}'_k)^{-1}  )\\
=&\ P\bigg( \lbar{f}(\lbar{u}) \, P(\lbar{f}(\lbar{u}))  \, \lbar{f}(\lbar{v}_1 \cdots \lbar{v}_{k})
\, P(\lbar{f}(\lbar{u}))^{-1} \bigg) \hspace{5cm}(\text{Eq.~(\ref{eq:rbdef3})} )\\
=&\ P\bigg( \lbar{f}(\lbar{u}) \, P(\lbar{f}(\lbar{u}))  \, \lbar{f}(\lbar{v})
\, P(\lbar{f}(\lbar{u}))^{-1} \bigg) \hspace{6cm}(\lbar{v} = \lbar{v}_1\cdots \lbar{v}_k)\\
=&\ P\Big(\lbar{f}(\lbar{u})\Big) \, P\Big(\lbar{f}(\lbar{v})\Big) \hspace{5cm} ((G, P) \text{ being a Rota-Baxter group})\\
=&\ \lbar{f}(\lc \lbar{u}\rc)\, \lbar{f}(\lc \lbar{v}\rc) \hspace{9cm}(\text{Eq.~(\ref{eq:rbdef1})} )\\
=&\ \lbar{f}(u)\, \lbar{f}(v).
\end{align*}

\noindent{\bf Subcase 1.2.} $\lbar{v}_k \notin \lc \rbgw\rc^{-1}$. Then
\begin{align*}
&\ \lbar{f}(u\diamond v)\\
=&\ \lbar{f}(\lc \lbar{u} \rc \diamond \lc \lbar{v}\rc)\\
=&\ \lbar{f}\Big( \Big\lc\lbar{u} \diamond \big( (u \diamond \lbar{v}) u^{-1}\big)\Big\rc \Big) \hspace{4cm} (\text{Eqs.~(\ref{eq:dia}) and~(\ref{eq:diam})})\\
=&\ P\Big( \lbar{f}\Big(\lbar{u} \diamond \big( (u \diamond \lbar{v}) u^{-1}\big)\Big) \Big) \hspace{4cm} (\text{Eq.~(\ref{eq:rbdef1})})\\
=&\ P\Big( \lbar{f}(\lbar{u}) \,\lbar{f}\big( (u \diamond \lbar{v}) u^{-1}\big) \Big) \hspace{4cm} (\text{induction on depth})\\
=&\ P\Big( \lbar{f}(\lbar{u}) \,\lbar{f}(u \diamond \lbar{v})\,\lbar{f}(u^{-1}) \Big) \hspace{4cm} (\text{Eq.~(\ref{eq:rbdef3})} ) \\
=&\ P\Big( \lbar{f}(\lbar{u}) \,\lbar{f}(u)\, \lbar{f}(\lbar{v})\,\lbar{f}(\lc \lbar{u}\rc^{-1}) \Big) \hspace{3.5cm} (\text{induction on depth and } u=\lc \lbar{u}\rc) \\
=&\ P\Big( \lbar{f}(\lbar{u}) \,\lbar{f}(\lc \lbar{u}\rc)\, \lbar{f}(\lbar{v})\,\lbar{f}(\lc \lbar{u}\rc)^{-1} \Big) \hspace{3.5cm} (\text{Eq.~(\ref{eq:rbdef2})}) \\
=&\ P\Big(\lbar{f}(\lbar{u})\, P\big(\lbar{f}(\lbar{u})\big)\,\lbar{f}(\lbar{v})\, P(\lbar{f}(\lbar{u}))^{-1}\Big) \hspace{3cm} (\text{Eq.~(\ref{eq:rbdef1})})\\
=&\ P\Big(\lbar{f}(\lbar{u})\Big)\, P\Big(\lbar{f}(\lbar{v})\Big) \hspace{5cm} ((G, P) \text{ being a Rota-Baxter group})\\
=&\ \lbar{f}(\lc \lbar{u}\rc)\lbar{f}(\lc \lbar{v}\rc) \hspace{6.3cm}(\text{Eq.~(\ref{eq:rbdef1})} )\\
=&\ \lbar{f}(u)\lbar{f}(v).
\end{align*}

\noindent{\bf Case 2.} Suppose $u= \lc \lbar{u}\rc^{-1}$ and $v=\lc \lbar{v}\rc^{-1}$ for some $\lbar{u},\lbar{v}\in \rbgw$. Then
\begin{align*}
 &\ \lbar{f}(u\diamond v)\\
 =&\ \lbar{f}(\lc \lbar{u}\rc^{-1} \diamond \lc \lbar{v}\rc^{-1}) \\
=&\ \lbar{f}\big((\lc \lbar{v}\rc\diamond \lc \lbar{u}\rc)^{-1}\big) \\
=&\ \Big(\lbar{f}(\lc \lbar{v} \rc\diamond \lc\lbar{u}\rc)\Big)^{-1} \hspace{2cm} (\text{Eq.~(\ref{eq:rbdef2})})\\
=&\ \Big(\lbar{f}(\lc \lbar{v} \rc) \,\lbar{f}(\lc\lbar{u}\rc)\Big)^{-1} \hspace{2cm} (\text{Case 1})\\
=&\  \big(\lbar{f}(\lc \lbar{u} \rc)\big)^{-1} \,\big(\lbar{f}(\lc\lbar{v}\rc)\big)^{-1}\\
=&\ \lbar{f}(\lc \lbar{u} \rc^{-1}) \,\lbar{f}(\lc\lbar{v}\rc^{-1}) \hspace{2cm} (\text{Eq.~(\ref{eq:rbdef2})}) \\
=&\ \lbar{f}(u) \lbar{f}(v).
\end{align*}

\noindent{\bf Case 3.} Suppose $\bre(u) = \bre(v)=1$, but $u, v$ are not in the previous two cases. Then by definition, $u\diamond v= uv$ is the concatenation. It follows from Eq.~(\mref{eq:rbdef3}) that
\begin{align*}
\lbar{f}(u\diamond v)=&\ \lbar{f}(uv)= \lbar{f}(u)\lbar{f}(v).
\end{align*}

\noindent{\bf Case 4.} Suppose at least one of $\bre(u)$ or $\bre(v)$ is greater than $1$. Let $u=u_{1}\cdots u_{m}$ and $v=v_{1}\cdots v_{n}$ be in the standard forms.
Then
\begin{align*}
&\ \lbar{f}(u\diamond v)\\
=&\ \lbar{f}\big(u_{1}\cdots u_{m-1}(u_{m}\diamond v_{1})v_{2}\cdots v_{n}\big) \\
=&\ \lbar{f}(u_{1})\cdots \lbar{f}(u_{m-1})\, \lbar{f}(u_{m}\diamond v_{1}) \,\lbar{f}(v_{2}) \cdots \lbar{f}(v_{n}) \hspace{2cm} (\text{Eq.~(\ref{eq:rbdef3})})\\
=&\ \lbar{f}(u_{1})\cdots \lbar{f}(u_{m})\, \lbar{f}(v_{1})\cdots \lbar{f}(v_{n}) \hspace{4cm} (\text{Cases 1, 2, 3})\\
=&\ \lbar{f}(u_{1}\cdots  u_{m}) \, \lbar{f}(v_{1}\cdots v_{n}) \hspace{5cm}(\text{Eq.~(\ref{eq:rbdef3})})\\
=&\ \lbar{f}(u)\lbar{f}(v).
\end{align*}

In summary, the map $\lbar{f}$ is a Rota-Baxter group homomorphism.

({\bf Uniqueness}) Notice that Eqs.~(\ref{eq:bfdep0})--(\ref{eq:rbdef3}) give the only possible way to define $\lbar{f}(w)$
in order for $\lbar{f}$ to be a Rota-Baxter group homomorphism that extends $f$. This proves the uniqueness.

Now the proof of Theorem~\mref{thm:rtgp2} is completed.
\end{proof}

\noindent
{\bf Acknowledgments.}  This work is supported by
NSFC (12071191, 1861051 and 12171211) and the Natural Science Foundation of Jiangxi Province (20192ACB21008).  


\noindent
{\bf Competing Interests.} On behalf of all authors, the corresponding author states that there is no conflict of interest.

\noindent
{\bf Data Availability.} The manuscript has no associated data.

\end{document}